\newcommand{\overset}{\stackrel}
\newtheorem{theorem}{Theorem}
\newtheorem{corollary}[theorem]{Corollary}
\newtheorem{lemma}[theorem]{Lemma}
\newcommand{\RR}{\mathbb{R}}
\newcommand{\ZZ}{\mathbb{Z}}
\newcommand{\NN}{\mathbb{N}}
\newcommand{\half}{\frac{1}{2}}
\newcommand{\ra}{\rightarrow}
\newcommand{\lra}{\leftrightarrow}
\newcommand{\PATH}{\operatorname{PATH}}
\newcommand{\Prob}{\mathbf P}
\newcommand{\eff}{\operatorname{eff}}
\begin{document}
\begin{frontmatter}

\title{Cutpoints and resistance of random walk paths}
\runtitle{Cutpoints and resistance of random walk paths}

\begin{aug}
\author[A]{\fnms{Itai} \snm{Benjamini}\ead[label=e1]{itai.benjamini@weizmann.ac.il}},
\author[B]{\fnms{Ori} \snm{Gurel-Gurevich}\corref{}\ead[label=e2]{origurel@microsoft.com}} and
\author[B]{\fnms{Oded} \snm{Schramm}\ead[label=e3]{schramm@microsoft.com}}
\runauthor{I. Benjamini, O. Gurel-Gurevich and O. Schramm}
\affiliation{The Weizmann Institute of Science, Microsoft Research and
Microsoft Research}
\address[A]{I. Benjamini\\
Faculty of Mathematics\\
and Computer Science\\
The Weizmann Institute of Science\\
POB 26, Rehovot, 76100\\
Israel\\
\printead{e1}}
\address[B]{O. Gurel-Gurevich\\
O. Schramm\\
Microsoft Research\\
One Microsoft Way\\
Redmond, Washington 98052-6399\\
USA\\
\printead{e2}\\
\phantom{E-mail: }\printead*{e3}}
\end{aug}

\received{\smonth{6} \syear{2009}}
\revised{\smonth{5} \syear{2010}}

%
\begin{abstract}
We construct a bounded degree graph $G$, such that a simple random walk
on it is transient but the random walk path (i.e., the subgraph of all
the edges the random walk has crossed) has only finitely many
cutpoints, almost surely. We also prove that the expected number of
cutpoints of any transient Markov chain is infinite. This answers two
questions of James, Lyons and Peres [\textit{A~Transient Markov
Chain With Finitely Many Cutpoints} (2007) Festschrift for David
Freedman].

Additionally, we consider a simple random walk on a finite connected
graph $G$ that starts at some fixed vertex $x$ and is stopped when it
first visits some other fixed vertex $y$. We provide a lower bound on
the expected effective resistance between $x$ and $y$ in the path of
the walk, giving a partial answer to a question raised in [\textit{Ann.
Probab.} \textbf{35} (2007) 732--738].
\end{abstract}

%
\begin{keyword}[class=AMS]
\kwd{60D05}
\kwd{60G50}.
\end{keyword}
\begin{keyword}
\kwd{Graph}
\kwd{random walk}
\kwd{path}
\kwd{cutpoints}.
\end{keyword}

\end{frontmatter}

\section{Introduction}

In this paper, we study natural geometric and potential theoretic
properties of the simple random walk path on general graphs. Given a
graph~$G$, a \textit{simple random walk} on $G$ is a Markov chain, $\{
X_t\}_{t=0}^\infty$, on
the vertices of the graph, such that the distribution of $X_{t+1}$
given the current state $X_t$, is uniform among the neighbors of
$X_t$. Given a sample of the simple random walk, the \textit{path}
of the walk (denoted $\PATH$) is the subgraph consisting of all the
vertices visited
and edges traversed by the walk.

Given a rooted graph $(G,g_0)$, a vertex $x$ of $G$ is a
\textit{cutpoint} if it separates the root $g_0$ from infinity, that is,
if removing $x$ from $G$ would result in $g_0$ being in a finite
connected component. A vertex is a cutpoint of the path of a walk if it
is a cutpoint of $(\PATH, X_0)$.

In \cite{path,trace} it was shown that the path of a simple random
walk is always a \textit{recurrent} graph, that is, a simple random walk
on the path returns to the origin, almost surely. If $G$ is of
bounded degree and the path has infinitely many cutpoints, then the
path is obviously recurrent. Indeed, this is the case when $G$ is the
Euclidean lattice, as shown in \cite{l,jp}. The question arises
naturally: does the path of a simple random walk on every graph have
infinitely many cutpoints, almost surely?

This question was raised in \cite{JLP}, where an
example of a nearest neighbor random walk on the integers that has
only finitely many cut-times almost surely is provided. A \textit
{cut-time} is a
time $t$ such that the past of the walk $\{X_0,\ldots,X_t\}$ is
disjoint from its future $\{X_{t+1},\ldots\}$. Clearly, a cut-time
$t$ induces a cutpoint $X_t$, but not vice verse. Indeed, in the
example in \cite{JLP}, the path of the walk is simply the integers,
and so every vertex (but 0) is a cutpoint. Moreover, \cite{JLP} left
open the question of whether there is such a \textit{simple random walk}
on a bounded degree graph.

Returning to our question, we answer it in the negative.
\begin{theorem} \label{example}
There exists a bounded degree graph $G$ such that the path of the
simple random walk on $G$ has finitely many cutpoints, almost
surely.
\end{theorem}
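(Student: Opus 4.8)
The plan is to build $G$ explicitly, as an arrangement of finite bounded-degree gadgets, and to show via a Borel--Cantelli argument that almost surely all but finitely many of the gadgets are crossed by the walk in a way that contributes no cutpoint.

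I would first record the reduction that drives everything. A vertex $v$ is a cutpoint of the path $\Pi$ iff $\Pi\setminus\{v\}$ leaves $X_0$ in a finite component; hence it suffices to exhibit a random, almost surely finite, subgraph $\Pi_0\ni X_0$ of $\Pi$ such that for every vertex $v\notin\Pi_0$ the graph $\Pi\setminus\{v\}$ still joins $\Pi_0$ to infinity. Equivalently, the trace should contain an infinite subgraph, covering all but finitely many of its own vertices, through which there are always at least two vertex-disjoint routes to infinity. So the construction must \emph{force} the walk to trace out, at every large scale, a piece of $G$ carrying two vertex-disjoint ``escape routes'' that the walk actually uses.

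For the construction itself I would take $G$ to be transient --- which, for a bounded-degree graph, essentially requires exponential volume growth or shortcuts on all scales --- and built from gadgets $H_1,H_2,\dots$ attached in a tree-like or linear pattern along ``wide'' interfaces $\Gamma_n$ that separate $X_0$ from infinity. Note that bounded degree together with finite effective resistance forces the separating interfaces to be unbounded in size; so the interface sizes must grow with $n$, and the gadget $H_n$ should be designed so that whenever the walk enters $H_n$ and eventually leaves it on the far side for good, the trace inside $H_n$ contains two vertex-disjoint crossing paths --- except on an event of probability $\varepsilon_n$ with $\sum_n\varepsilon_n<\infty$. One then glues consecutive gadgets so that the crossing paths of $H_n$ and of $H_{n+1}$ meet the common interface in at least two shared vertices, so nothing is pinched at the junctions either. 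Granting this, Borel--Cantelli produces an almost surely finite random $N$ beyond which every gadget is crossed well; since the part of $\Pi$ inside $H_1,\dots,H_N$ is finite, $\Pi$ has finitely many cutpoints.

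The heart of the matter --- and where I expect the real work to be --- is designing $H_n$ and $\Gamma_n$ so that $\varepsilon_n$ is summable. The obstruction is a genuine one: a transient walk typically crosses any prescribed bottleneck only $O(1)$ times, with a constant chance of crossing it exactly once, and a single crossing of a thin bottleneck is just a path segment whose interior vertices become cutpoints of $\Pi$. To beat this one must make the interfaces truly ``thick'' --- so that even a single passage already touches the interface at many points, or so that the walk is compelled to revisit it many times --- while staying bounded degree and transient, and one must control the logarithmic-type resistance a walk must pay to return to a region after having crossed it from a single vertex. Balancing these constraints, enough thickness for two robust escape routes against little enough resistance for transience, is the crux, and is precisely what the detailed choice of parameters (gadget sizes, interface sizes, attachment pattern) has to accomplish.
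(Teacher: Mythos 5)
Your reduction (find an a.s.\ finite piece of the trace outside of which every vertex lies on two vertex-disjoint routes to infinity inside the trace) is sound, and the overall Borel--Cantelli shape is the right one, but what you have written is a plan rather than a proof: the construction of the gadgets and the summable estimates are exactly the content of the theorem, and you explicitly defer them. The paper's example is concrete: layer $j$ is a $d$-regular expander on $2^k$ vertices for $2^k/k^\alpha\le j<2^{k+1}/(k+1)^\alpha$ with $\alpha>1$, consecutive layers joined by bounded-degree bipartite gadgets, so that the layer coordinate $X_t$ is a barely transient birth-and-death chain. The two missing ingredients are then (i) a \emph{linking} lemma: with $j_-\approx j-j^\beta$ and $j_+\approx j+j^\beta$, all but finitely many $j$ are, almost surely, recrossed at least $M$ times between $j_+$ and $j_-$; and (ii) a mixing estimate: each such pass, because the layers are expanders, intersects both the past and the future of the walk while avoiding any fixed vertex $v\in V_j$ except with probability $O(j^{\beta-1})$, whence $\Prob(v\text{ is a cutpoint})=O(j^{M(\beta-1)})$, which summed over the $\asymp j\log^\alpha j$ vertices of $V_j$ and over $j$ is finite once $\beta\in(1/2,1)$ and $M>2/(1-\beta)+2$. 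Neither ingredient appears in your proposal, and the second (recrossings must both hit the earlier and later trace and miss any prescribed vertex) is not even formulated.

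More importantly, your specific quantitative plan --- a per-gadget failure probability $\varepsilon_n$ with $\sum_n\varepsilon_n<\infty$ --- is not how a barely transient example can work, and this is a substantive gap rather than a lack of detail. In the regime where the graph is transient but only just, the probability that scale $j$ fails to be recrossed $M$ times is of order $j^{\beta-1}/\log j$, which is \emph{not} summable, so a per-scale union bound is unavailable; making it summable would push the construction toward strong transience, where recrossings stop happening at all (the tension you yourself point out). The paper escapes this by exploiting positive correlation between failures at nearby scales: conditioned on a failure at $j$, the expected number of failures among scales in the same dyadic block is of order $2^{k\beta}k$, while the unconditional expectation is of order $2^{k\beta}/k$, so the probability that a dyadic block contains any failure is $\asymp 1/k^2$, and Borel--Cantelli is applied at the level of blocks, not of individual scales. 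You identified the crux correctly but resolved it neither by an explicit construction nor by this second-moment/clustering mechanism, so the argument does not close as stated.
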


In Section \ref{example_proof}, we construct an ad hoc example and
prove it has the claimed property. In Section \ref{horns}, we argue that
subgraphs of $\ZZ^d$, $d\ge3$, spanned by vertices satisfying
$x_1\le f(x_2,\ldots,x_n)$ for an appropriate choice of $f$
also exhibit this property.

In \cite{JLP} it is noted that in their example (as well as in similar
examples in \cite{CFR}), the \textit{expected}
number of cut-times is infinite. We show that this is, in fact, the
case for any transient Markov chain.
\begin{theorem} \label{infinity}
For every transient Markov chain
the expected number of cut-times (and hence cutpoints) of the path
is infinite.
\end{theorem}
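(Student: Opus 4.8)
The plan is to reduce to cut-times and then to prove that $\sum_{t}\mathbf P(C_t)=\infty$, where $C_t$ is the event that $t$ is a cut-time, by comparing this sum with the expected size of the range. First, it suffices to control the number $N$ of cut-times: if $s<t$ are both cut-times and $X_s=X_t$, then $X_s$ lies in the future $\{X_{s+1},X_{s+2},\dots\}$, contradicting that $s$ is a cut-time, so distinct cut-times yield distinct cutpoints and the number of cutpoints is at least $N$. Moreover, since the chain is transient it visits every state only finitely often, hence it visits infinitely many distinct states, so its range is infinite almost surely and $\mathbf E|\mathrm{range}|=\infty$.

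Write $N=\sum_{t\ge 0}\mathbf 1_{C_t}$ with $C_t=\{\{X_0,\dots,X_t\}\cap\{X_{t+1},X_{t+2},\dots\}=\emptyset\}$, so $\mathbf E[N]=\sum_t\mathbf P(C_t)$. Conditioning on the first $t$ steps and using the Markov property at time $t$, the future is an independent copy of the chain started from $X_t$, so $\mathbf P(C_t)=\mathbf E\bigl[h(X_t,R_t)\bigr]$ where $R_t:=\{X_0,\dots,X_t\}$ and $h(x,A):=\mathbf P_x(X_1,X_2,\dots\ \text{avoid}\ A)$ is the probability that the chain from $x$ never enters $A$. Since every cut-time $t$ is necessarily the last visit to the vertex $X_t$, regrouping the sum by that vertex gives the equivalent identity
\[
\mathbf E[N]=\sum_{x}\mathbf P\bigl(x\in\mathrm{range}\ \text{ and the walk after its last visit to }x\text{ avoids }R_{\ell_x}\bigr),
\]
where $\ell_x$ is the time of the last visit to $x$; by a last-exit decomposition the $x$-summand equals the expected probability that a fresh copy of the chain issued from $x$, conditioned never to return to $x$, escapes without re-entering $R_{\ell_x}$.

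The heart of the matter is to show this sum diverges. Since $\sum_x\mathbf P(x\in\mathrm{range})=\mathbf E|\mathrm{range}|=\infty$, it would suffice to bound the accompanying escape probabilities from below, and this is the main obstacle: when the past range is large and surrounds $x$ that probability can be arbitrarily small, so a term-by-term comparison fails and transience must be exploited quantitatively. The route I would take is to work with the stopping times $\tau^{(n)}$ at which the $n$-th distinct vertex $Z_n:=X_{\tau^{(n)}}$ is discovered, so that by the strong Markov property $\mathbf P(\tau^{(n)}\text{ is a cut-time})=\mathbf E\bigl[h(Z_n,\{Z_1,\dots,Z_n\})\bigr]$ and $\mathbf E[N]\ge\sum_n\mathbf P(\tau^{(n)}\text{ is a cut-time})$; one then replaces the already-visited set $\{Z_1,\dots,Z_n\}$ by a carefully chosen recent portion of the trajectory near $Z_n$ on which a cut-time is created with not-too-small probability, organizing the bound over a sequence of scales so that the resulting lower bounds, although individually decaying, still sum to infinity. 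Making this scale selection work uniformly over all transient chains, rather than for a single concrete example, is the genuinely delicate point.
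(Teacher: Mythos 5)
There is a genuine gap, and you essentially flag it yourself: the entire content of the theorem is the step you defer to ``a carefully chosen recent portion of the trajectory'' and ``organizing the bound over a sequence of scales,'' and you then concede that making this work uniformly over all transient chains is ``the genuinely delicate point.'' Nothing in the proposal supplies that mechanism. The reduction from cutpoints to cut-times is correct (distinct cut-times give distinct cutpoints), the identity $\mathbf E[N]=\sum_t\mathbf P(C_t)$ with the Markov property is correct, and the observation that $\mathbf E|\mathrm{range}|=\infty$ is true but, as you note, useless on its own because the escape probabilities attached to each visited vertex can decay arbitrarily fast. So what remains is a plan, not a proof, and the plan's organizing principle (summing over discovered vertices / the range, then grouping by scales) is not the one that actually closes the argument.

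The paper's resolution uses a different and quite short device that your sketch does not contain: let $f(x)$ be the probability that the chain started at $x$ ever visits $X_0$. Then $f(X_i)$ is a martingale away from $X_0$ and tends to $0$ by transience. Let $M_n$ be the successive record minima of $f(X_i)$, attained at times $i_n$. Optional stopping gives that the probability the walk ever again reaches a state with $f$-value at least the previous record $M_{n-1}$ is at most $M_n/M_{n-1}$; on the complementary event, $i_n$ is a cut-time, because the past up to $i_n$ lies in $\{f\ge M_{n-1}\}$ and the future after $i_n$ stays in $\{f<M_{n-1}\}$ (in fact below $M_n$ from $i_n$ on). Hence the expected number of cut-times is at least $\sum_n\mathbf E(1-M_n/M_{n-1})$, which is infinite because $M_n\downarrow 0$ forces $\prod_n M_n/M_{n-1}=0$. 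Note that this argument never counts vertices of the range and requires no scale selection; it converts transience directly into a divergent series via the multiplicative telescoping of the record minima. (There is also a small case analysis for reducible chains, which your proposal does not address.) If you want to salvage your route, you would need to prove a uniform quantitative lower bound on the conditional cut-time probabilities along your discovery times, and it is not clear such a bound holds; I would recommend replacing the range-based comparison with the return-probability martingale.
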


Lastly, we consider the resistance of the path when considered as an
electrical network with each edge being a unit resistor. As mentioned,
in \cite{path,trace} it is proved that the
path of a simple random walk is recurrent, almost surely, and therefore
its resistance to
infinity is infinite. In Section \ref{resistance} we give a
quantitative version of this theorem, providing explicit bounds on the
resistance of a finite
portion of the path, in terms of the maximal degree of $G$ and the
probability of return from the boundary of the finite portion to the origin.

\section{Open questions}

Some open questions present themselves: under what conditions does the
path of a random walk have a.s. infinitely many cutpoints? This
question was largely resolved in \cite{CFR} for the special case of
nearest neighbors walks on the integers. For general (bounded degree)
graphs, we find the following 2 questions interesting:
\begin{itemize}
\item
Does a strictly positive $\liminf$ speed of a simple random walk imply
having a.s. infinitely many cut points of its path?
\item
Does the path of a simple random walk on any transient vertex
transitive graph have a.s. infinitely many cut points?
\end{itemize}

We conjecture that the answer to both questions is positive.

Theorem \ref{example} can be easily generalized to show that for every
positive integer $k$, the path in our example has only finitely many
minimal \textit{cutsets} of size $k$ (i.e., sets whose removal from the
path disconnect $X_0$ from infinity). This is done by choosing a
suitably large $M$ in the proof. Since the construction itself does not
depend on $M$, we know that there are actually only finitely many
finite minimal cutsets in the path. Furthermore, by allowing $M$ to
depend of the layer and slowly tend to infinity, one can get an
explicit lower bound on the rooted isoperimetric profile of the path.
It is well known (see, e.g., \cite{T}) that any graph satisfying a
large enough rooted isopermietric inequality is transient. In our
context the natural question is this:

\begin{itemize}
\item
Given an isoperimetric profile $f$ which does not imply transience, is
there a bounded degree graph $G_f$, such that the path of a simple
random walk on $G_f$ satisfies the rooted $f$-isoperimetric inequality?
In other words, is there some upper bound on the isoperimetric profile
of the path?
\end{itemize}

\section{\texorpdfstring{Proof of Theorem \protect\ref{example}}{Proof of Theorem 1}}
\label{example_proof}

\subsection{Construction}

Let $E_n$ be a sequence of $d$-regular expanders, where $E_n$ has
$n$ vertices. The graph we describe is composed of layers, $G_j$ for
$j\in\NN$, where edges are only within a single layer or between
adjacent layers. Fix some $\alpha>1$. For $2^k / k^\alpha\le j <
2^{k+1} / (k+1)^\alpha$, we let $G_j$ be a copy of $E_{2^k}$.
(Actually, this only defines $G_j$ for $j\ge j_0$ for some $j_0\in\NN$,
which depends on $\alpha$. For $j\in\NN\cap[0,j_0)$, we take $G_j=G_{j_0}$.)
If $G_j$ and $G_{j+1}$ are of the same size, we connect $x\in G_j$ with
$y \in G_{j+1}$ if $x$ and $y$ are connected in $E_{2^k}$. If $G_{j+1}$
is twice the size of $G_j$, we choose some bipartite graph on the
vertices of $G_j \cup G_{j+1}$ which has $2d$ edges attached to each
vertex in $G_j$ and $d$ edges attached to each vertex in $G_{j+1}$.
Denote the resulting graph $G$. We claim that this $G$
has the properties we seek in Theorem~\ref{example}.

\subsection{Proof}

Let $Z_t=(X_t,Y_t)$ be a simple random walk on $G$, where $X_t$
marks the layer, and $Y_t$ the location in $V_{X_t}$
(here, $V_x$ denotes the set of vertices at layer~$x$). Since the
expanders are of constant degree, the probability of the walk
moving up, down or staying in the same layer is independent of the
position inside the layer. Therefore, $X_t$ is a (lazy)
random walk on $\NN$, which can be easily described as follows.
Let $w(j,j+1)=w(j+1,j)$ denote the number of edges connecting $G_j$ and
$G_{j+1}$,
and let $w(j,j)$ be twice the number of edges of $G_j$.
Then $X_t$ is the network random walk on the network $(\NN,w)$, and
$\eta_{X_t}$ is a martingale,
where $\eta_j:=\sum_{i=j}^\infty r_i$ and $r_i:=1/w(i,i+1)$.
In particular, the probability
that such a walk starting from $j$ ever returns to $0$ is $\eta_j/\eta_0$.
Since $r_j \asymp j^{-1} \log^{-\alpha}(j)$,
where $\asymp$ means that the ratio is bounded
and bounded away from zero, we have $\eta_j\asymp\log^{1-\alpha}(j) $,
and $X_t$ is transient.

The Markov chain $X_t$ is the kind of chain which is given in \cite
{JLP} as an example
of a Markov chain with a.s. only finitely many cut-times. We will
analyze the walk more thoroughly in the following.

Fix some $0<\beta<1$ and $j\in\NN_+$.
Write
\begin{eqnarray*}
j_- &:=&\lfloor j-j^\beta\rfloor,\\
j_+ &:=&\lceil j+j^\beta\rceil.
\end{eqnarray*}
Define $s_0=s_0(j):=\inf\{t\in\NN\dvtx X_t=j_-\}$, $t_0=t_0(j):=\inf\{
t\in\NN\dvtx X_t=j_+\}$
and inductively $s_i=s_i(j):=\inf\{t>t_{i-1}\dvtx X_t=j_-\}$ and
$t_i=t_i(j):=\inf\{t>s_i\dvtx X_t=j_+\}$.
(As usual, the convention $\inf\varnothing=\infty$ is used.)
The \textit{linking} of $j$ is defined as $\ell(j):=\sup\{i\in\NN
\dvtx t_i<\infty\}$.
We fix some constant $M\in\NN_+$, and say that $j$ is \textit{linked}
if $\ell(j)\ge M$.
Let $I_j$ be the event that $j$ is not linked, and let $p_j:=\Prob(I_j)$.
\begin{lemma} \label{linked}
Almost surely, the set of $j\in\NN$ that are not linked is finite.
\end{lemma}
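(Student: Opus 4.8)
The plan is to show that $\sum_j p_j < \infty$ and then apply Borel--Cantelli; the only subtlety is that the events $I_j$ are not independent, so the conclusion needs a small additional observation. First I would estimate $p_j = \Prob(I_j)$, the probability that $j$ is not linked, i.e.\ that the walk $X_t$ makes fewer than $M$ successful down-up crossings between $j_- = \lfloor j - j^\beta\rfloor$ and $j_+ = \lceil j + j^\beta \rceil$. The key tool is the formula already recorded in the excerpt: for the network walk $(\NN, w)$, the probability that the walk started at a level $k$ ever reaches $0$ is $\eta_k / \eta_0$, and more generally, by the same martingale argument applied to the stopped chain, the probability that the walk started at $j_+$ reaches $j_-$ before escaping upward past some far level is governed by differences of $\eta$. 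Since $\eta_k \asymp \log^{1-\alpha}(k)$, the increment $\eta_{j_-} - \eta_{j_+}$ over a window of width $\asymp 2 j^\beta$ around $j$ is of order $j^{\beta - 1} \log^{-\alpha}(j)$ (the derivative of $\log^{1-\alpha}$ times the window width), while $\eta_{j_-} \asymp \log^{1-\alpha}(j)$. Hence, starting from $j_+$, the probability of ever coming back down to $j_-$ is $1 - c\,(\eta_{j_-}-\eta_{j_+})/\eta_{j_-} \le 1 - c'\, j^{\beta-1}\log^{-\alpha - (1-\alpha)}(j) = 1 - c'\, j^{\beta - 1}\log^{-1}(j)$ — crucially bounded away from $1$ only by a polynomially small amount. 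Because the $M$ attempted excursions are made independent by the strong Markov property, one gets $\ell(j) \ge M$ with probability at least $(c'' j^{\beta-1}\log^{-1} j)^{M}$, so
$$
p_j = \Prob\bigl(\ell(j) < M\bigr) \le 1 - \bigl(c''\, j^{\beta - 1}\log^{-1}(j)\bigr)^{M}.
$$
Wait — this bound on $p_j$ is close to $1$, not close to $0$, so $\sum_j p_j$ diverges and plain Borel--Cantelli fails; this is exactly the main obstacle.

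The resolution, which I expect is the intended route, is to exploit the geometric decay in $M$ differently: rather than summing $p_j$ over all $j$, one should sum over a sparse subsequence of scales where the events become genuinely small, or — more robustly — observe that being unlinked at $j$ forces the walk to make an atypically quick upward escape through the window at $j$, and these ``fast escape'' events at well-separated scales $j_k = 2^{2^k}$ (say) are, up to the strong Markov property, nearly independent, with the $k$-th one having probability bounded away from $1$ by a fixed amount after rescaling, because over a dyadic range of $\log$-scales the relative drop of $\eta$ is of constant order. Concretely: partition $\NN$ into blocks $B_k = [j_k, j_{k+1})$ with $j_k$ growing doubly exponentially; in block $B_k$ the quantity $\eta$ drops by a constant factor, so with probability bounded below by a constant $\delta > 0$ (independent of $k$) the walk, on its last visit to the vicinity of $B_k$ before escaping to infinity, performs at least $M$ full down-up crossings of \emph{some} window inside $B_k$, making every $j$ in a definite sub-block of $B_k$ linked. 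Then the events ``block $B_k$ has an unlinked point'' have summable — in fact geometrically summable after taking $M$ large, or at least Borel--Cantelli-summable — probabilities, because failure in block $B_k$ needs all relevant excursions to avoid the window, an event of probability at most $(1-\delta)$ per independent attempt and there are order-$k$ disjoint attempts available within the widening block. By Borel--Cantelli, almost surely only finitely many blocks contain an unlinked point, hence only finitely many $j$ are unlinked.

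The steps, in order: (1) record the exact hitting-probability identity for the stopped chain via the $\eta$-martingale, and the asymptotics $\eta_j \asymp \log^{1-\alpha} j$, $\eta_{j_-} - \eta_{j_+} \asymp j^{\beta-1}\log^{-\alpha} j$; (2) use the strong Markov property to write, for a walk that is transient upward, the number of completed $j_-\!\to\! j_+$ round trips before final escape as (stochastically dominating) a geometric-type count, and get a lower bound on $\Prob(\ell(j)\ge M)$; (3) upgrade from individual $j$ to dyadic-in-$\log$ blocks, where the relative drop of $\eta$ is constant, so that within a block there are many essentially independent chances to realize $M$ crossings of a common window, making the block-failure probability summable in $k$; (4) conclude with Borel--Cantelli that a.s.\ finitely many blocks fail, hence finitely many $j$ are unlinked. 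The main obstacle is Step (3): setting up the window inside each block and the family of independent excursion attempts so that the per-block failure probability is genuinely summable, rather than merely bounded away from $1$; this is where the doubly-exponential choice of block endpoints and the transience (guaranteeing a well-defined ``last visit'' to each block) are both used.
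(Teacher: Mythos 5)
There are two genuine problems here, one computational and one structural.

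First, your estimate of $p_j$ is inverted. Each time the walk sits at $j_+$, the probability that it \emph{fails} to return to $j_-$ is $1-\eta_{j_+}/\eta_{j_-}\asymp j^{\beta-1}\log^{-1}(j)$, which is small; so each of the first $M$ crossing attempts \emph{succeeds} with probability close to $1$, and by the strong Markov property
$\Prob(\ell(j)\ge M)\ge \bigl(1-c\,j^{\beta-1}\log^{-1}j\bigr)^M$, whence
$p_j=\Prob(\ell(j)<M)\le C M\, j^{\beta-1}\log^{-1}(j)$. You instead raised the \emph{small} quantity to the $M$-th power, as if a successful return were the rare event, and concluded $p_j$ is near $1$. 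The actual obstacle is different from the one you identified: $p_j\to 0$, but $\sum_j j^{\beta-1}\log^{-1}(j)=\infty$ for $\beta>0$, so the first Borel--Cantelli lemma still does not apply directly.

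Second, your proposed resolution (doubly-exponential blocks, ``order-$k$ independent attempts'' per block) does not contain the mechanism that makes the argument work, and as sketched it would not give summable block-failure probabilities: each block contains exponentially many $j$, each unlinked with probability $\asymp j^{\beta-1}\log^{-1}j$, so a union bound over a block diverges, and the events for distinct $j$ in a block are not the kind of independent trials your sketch requires. The paper's route is a conditional first-moment (second-moment-type) argument exploiting \emph{positive correlation}: set $A_k=\sum_{2^k<j\le 2^{k+1}}1_{I_j}$, so $E(A_k)\asymp 2^{k\beta}/k$; then show that conditionally on $A_k>0$ (conditioning on the largest unlinked $j^*$ in the block and on the relevant excursion data $Q(j^*)$), each $i<j^*_-$ in the same or previous block is unlinked with conditional probability $\gtrsim i^\beta/(j^*-i)$, so that $E(A_{k-1}+A_k\mid A_k>0)\gtrsim 2^{k\beta}k$. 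The ratio gives $\Prob(A_k>0)\lesssim 1/k^2$, which is summable, and Borel--Cantelli applied to the block events $\{A_k>0\}$ finishes the proof. Without some version of this ``if one point is unlinked then many are'' step, the divergence of $\sum_j p_j$ is not overcome.
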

\begin{pf}
Let $p_j$ be the probability that $j$ is not linked.
When the walk is
at $j_+$, the probability of it never reaching $j_-$
again is
\[
1-\frac{\eta_{j_+}}{\eta_{j_-}}=\frac1{\eta_{j_-}}\sum
_{i=j_-}^{j_--1} r_i.
\]
Since
$\eta_j \asymp\log^{1-\alpha}(j)$ and $r_i \asymp j^{-1}
\log^{-\alpha}(j)$ for any $i\in\{j_-,\ldots,j_+\}$, we
get that this probability is $\asymp j^{\beta-1}\log^{-1}(j)$. Thus,
\[
p_j\asymp M j^{\beta-1}\log^{-1}(j) \asymp
j^{\beta-1}\log^{-1}(j) ,
\]
since $M$ is constant.

We would like to
estimate $\Prob(I_i \mid I_j)$ for $i<j$ (or more
precisely some variant thereof). For technical reasons we
impose the condition $i < j_-$.

Note that $I_j$ depends only on those steps of the walk between a
visit to $j_+$ and the next visit to $j_-$, if it
occurs. Therefore, the rest of the walk, that is, between visits to
$j_-$ and $j_+$, as well as before the first visit to
$j_+$, retains the law of the network walk when
conditioning on $I_j$.
Let $Q=Q(j)$ denote the segments of
the path between visits to $j_+$ and visits to $j_-$.
More precisely, the $k$th segment is
\[
Q^k=Q^k(j):= (X_{t_{k}},X_{t_{k}+1},\ldots,X_{s_k})
\]
for $k\in\{0,1,\ldots,\ell(j)-1\}$,
\[
Q^{\ell}:=(X_{t_\ell},X_{t_\ell+1},\ldots)
\]
for $k=\ell=\ell(j)$, and finally
\[
Q=Q(j):=(Q^1,Q^2,\ldots,Q^\ell) .
\]

Now, when the network walk is started at $j_-$ the
probability that it hits $j_+$ before $i_-$ is at least
some constant $c>0$ (because $i<j _-$). The probability of the
walk started at $i_+$ to hit $j_+$ before
$i_-$ is
\[
\frac{\eta_{i_+}-\eta_{i_-}}{\eta_{j_+}-\eta_{i_-}}
\asymp\frac{ i^\beta}{ j-i} .
\]
Thus, the conditional independence noted above implies that
when $i<j_-$ on the event $\ell(j)<M$ we have
%
%
\begin{equation}
\label{e.Ii}
\Prob(I_i\mid Q(j))\ge
\Prob\bigl(\ell(i)=0 \mid Q(j)\bigr)
\asymp c^{\ell(j)} \frac{i^\beta}{ j-i}
\asymp\frac{ i^\beta}{j-i} ,
\end{equation}
where the implied and explicit constants may depend on
$\alpha$, $\beta$ and $M$.

Let $A_k=\sum_{2^k<j\le2^{k+1}} 1_{I_j}$. For $2^k<j\le2^{k+1}$ we
have $p_j \asymp j^{\beta-1}\log^{-1}(j) \asymp2^{k\beta- k} / k$.
Therefore, $E(A_k) = \sum_{2^k<j\le2^{k+1}} p_j \asymp
2^{k\beta}/k$. Also, $E(A_{k-1}+A_k) \asymp2^{k\beta}/k$.

Next, we would like to bound $E(A_{k-1} + A_k \mid A_k >0)$. If $A_k>0$
then $I_j$ occurs, for some $2^k<j\le2^{k+1}$. Let $j^*$ be the
largest of this set; that is, $j^*:=\max\{j\in(2^k,2^{k+1}]\dvtx
I_j\mbox
{ holds}\}$.
Note that $j^*=j$ is $Q(j)$-measurable. Therefore,
\begin{eqnarray*}
E(A_{k-1}+A_k \mid A_k >0)
&\ge&
\min_{2^k<j\le2^{k+1}} E(A_{k-1}+A_k \mid j^*=j, A_k>0)
\\
&\ge&
\min_{2^k<j\le2^{k+1}}
\inf_{z}
\sum_{i=2^{k-1}}^{j_-}
\Prob\bigl(I_i \mid Q(j)=z\bigr) ,
\end{eqnarray*}
where the infimum is over all possible $z$ such that
$\{Q(j)=z\}\cap\{j^*=j\}$ is possible.
Thus, (\ref{e.Ii}) gives
\[
E(A_{k-1}+A_k \mid A_k >0)
\ge c
\min_{2^k<j\le2^{k+1}}
\sum_{i=2^{k-1}}^{j_-} \frac{i^\beta}{j-i}\asymp
2^{k\beta} k (1-\beta)\asymp2^{k\beta} k .
\]
Therefore, $\Prob(A_k>0)=E(A_{k-1}+A_k)/E(A_{k-1}+A_k \mid A_k >0)
\asymp1/k^2$. Thus, $\sum_{k=1}^\infty\Prob(A_k>0) <\infty$, which
implies that a.s. at most finitely many $k$
satisfy \mbox{$A_k>0$}.
\end{pf}

Returning to the full random walk $Z_t$ we prove that if $j$ is a
linked point (vertex) of the walk $X_t$ then the probability of any
point in
$V_j$ being a cutpoint of $Z_t$ is small (for suitable $\beta$ and
$M$).

Fix $j$ and first assume for simplicity that there is no $k$ such that
$j_- \le2^k / k^\alpha\le j_+$; then $X_t$ is a martingale
in this range.
Call a
segment of the random walk timeline, $s, s+1,\ldots,t$, a \textit{pass
around $j$} if $X_s=j_-$, $X_t=j_+$ and $X_i$ is neither
$j_-$ nor $j_+$ for $i=s+1,\ldots,t-1$. In other words,
in a pass, the walk starts at $j_-$ and ends at $j_+$,
all the while staying between these two endpoints. If $j$ is linked
then there are at least $M$ (time-)disjoint passes around it. Note that
we might as well have used the passes in the reverse direction (from
$j_+$ to $j_-$), getting $2M$ passes, but since
$M$ is arbitrary, there is no need for this.

If $s,\ldots,t$ is a pass around $j$, then $X_s,\ldots,X_t$ is a
delayed simple random walk on $\NN$, started at $j_-$ and conditioned
on hitting $j_+$ before returning to $j_-$. Next, we
prove some simple facts about the typical behavior of such a walk.

\subsection{Interlude: Two elementary facts about SRW}

Let $x_0,x_1,\ldots$ be a simple random walk on $\ZZ$, started at
$x_0=0$. Let $\tau_i=\min\{t>0\mid x_t=i\}$ be the hitting time of $i$
(excluding the starting position). Let $a\in\NN_+$.
We are interested in the
behavior of the walk conditioned on $\tau_a<\tau_0$.
\begin{lemma} \label{chernoff}
\[
\Prob(\tau_a < t \mid\tau_a < \tau_0) < 2 a t e^{-a^2/4t}.
\]
\end{lemma}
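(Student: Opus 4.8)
The plan is to bound $\Prob(\tau_a<t,\ \tau_a<\tau_0)$ directly and then divide by $\Prob(\tau_a<\tau_0)$, which by the gambler's ruin estimate equals $1/a$ (since the walk starts at $0$ and hits $a$ before $0$ with probability... well, actually here we should be careful — starting at $0$, reaching $a$ before returning to $0$ has probability $\tfrac12\cdot\tfrac1a$ after the first step, so $\Prob(\tau_a<\tau_0)=\tfrac{1}{2a}$; in any case it is $\asymp 1/a$ and the explicit constant $2$ in the statement absorbs the discrepancy). So it suffices to show $\Prob(\tau_a<t)\le a t\,e^{-a^2/4t}$, and in fact the unconditioned tail bound $\Prob(\tau_a<t)\le \Prob(\max_{s\le t} x_s\ge a)$ is what we want.

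First I would drop the conditioning event in the numerator: $\Prob(\tau_a<t,\ \tau_a<\tau_0)\le \Prob(\tau_a<t)=\Prob\bigl(\max_{0\le s< t} x_s\ge a\bigr)$. Then I would apply the standard reflection/maximal inequality together with an exponential Markov bound on $x_s$ itself. Concretely, for any $\lambda>0$ the process $e^{\lambda x_s}(\cosh\lambda)^{-s}$ is a martingale, so $e^{\lambda x_s - s\lambda^2/2}$ is a supermartingale (using $\cosh\lambda\le e^{\lambda^2/2}$); applying the optional stopping / maximal inequality for nonnegative supermartingales at the stopping time $\tau_a\wedge t$ gives $\Prob(\tau_a<t)\le e^{-\lambda a + t\lambda^2/2}$. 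Optimizing over $\lambda$ by taking $\lambda=a/t$ yields $\Prob(\tau_a<t)\le e^{-a^2/(2t)}$, which is even stronger than needed. Combining with $\Prob(\tau_a<\tau_0)^{-1}=2a$ gives $\Prob(\tau_a<t\mid\tau_a<\tau_0)\le 2a\,e^{-a^2/2t}< 2at\,e^{-a^2/4t}$, the last step being a crude bound valid because $e^{-a^2/4t}>1/t$ is false in general — so I would instead keep the cleaner bound $2a\,e^{-a^2/2t}$ and note it implies the stated one whenever $t\ge 1$, since then $e^{-a^2/4t}\ge$ nothing helpful; more honestly, $2a e^{-a^2/2t}\le 2at e^{-a^2/4t}$ iff $e^{-a^2/4t}\le t$, which holds because $e^{-a^2/4t}\le 1\le t$ for $t\ge1$. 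Since $t\in\NN_+$ here (hitting times are integers and the event $\tau_a<t$ is empty for $t\le a$), we have $t\ge 1$ and we are done.

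The only mild subtlety — the part I would be most careful about — is the bookkeeping of constants: whether to use the reflection principle (giving $\Prob(\max_{s<t}x_s\ge a)\le 2\Prob(x_t\ge a)$, then a Chernoff bound on the single variable $x_t$, which is a sum of $t$ i.i.d.\ $\pm1$ steps and hence satisfies $\Prob(x_t\ge a)\le e^{-a^2/2t}$) or the supermartingale maximal inequality directly. Both routes land at essentially $e^{-a^2/2t}$ up to a factor of $2$, comfortably inside the slack of the stated bound $2at\,e^{-a^2/4t}$; the factor $t$ and the $4$ in the exponent are deliberately generous, presumably so the same lemma can be quoted later without re-optimizing. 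I expect no real obstacle; the whole argument is three lines once the martingale is written down.
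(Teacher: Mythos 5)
Your proof is correct, but it takes a genuinely different route from the paper's. The paper bounds $\Prob(\tau_a<t)$ by a union bound over the times $s<t$ of the pointwise Chernoff tail $\Prob(x_s\ge a)\le 2e^{-a^2/4s}\le 2e^{-a^2/4t}$ --- this union bound is precisely where the factor $t$ in the statement comes from --- and then passes to the conditional probability by dividing by $\Prob(\tau_a<\tau_0)$. You instead apply the maximal inequality to the exponential supermartingale $e^{\lambda x_s-s\lambda^2/2}$ stopped at $\tau_a\wedge t$ and optimize $\lambda=a/t$, which controls the running maximum in one stroke and gives the sharper estimate $\Prob(\tau_a<t)\le e^{-a^2/2t}$ with no factor of $t$; you then verify $2a\,e^{-a^2/2t}\le 2at\,e^{-a^2/4t}$ via $e^{-a^2/4t}\le 1\le t$, so the stated (weaker) form follows. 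Your intermediate bound is in fact at least as good as the one the paper's post-lemma remark says could be extracted from the reflection principle and the CLT, so nothing is lost, and the deliberately generous factors $t$ and $4$ in the statement are exactly the slack you exploit at the end. Two minor remarks: the detour in your final step about whether $e^{-a^2/4t}>1/t$ is unnecessary, since the comparison you ultimately settle on is the correct and sufficient one; and you are right that for simple random walk on $\ZZ$ started at $0$ one has $\Prob(\tau_a<\tau_0)=1/(2a)$ rather than the $1/a$ asserted in the paper (the first step must go up), which is a harmless constant-factor slip there --- and with your sharper tail bound the stated constant $2$ still comes out even after paying the full factor $2a$ for the conditioning.
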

\begin{pf}
For any $s\le t$, a Chernoff bound yields $\Prob(x_s \ge a) \le2
e^{-a^2/4s} \le2 e^{-a^2/4t}$. By a union bound, $\Prob(\tau_a < t)
\le2 t e^{-a^2/4t}$. Since we condition on an event of probability
$\Prob(\tau_a < \tau_0)=1/a$, the conditional probability cannot
increase by more than a factor of $a$.
\end{pf}

Note: this is far from the best bound, but it suffices for our
purposes. Using the reflection principle and the central limit
theorem one can get a bound of the form $ C_\varepsilon
e^{-a^2/(2+\varepsilon)t}$ for any $\varepsilon>0$, if not better.

Assume, for simplicity, that $a$ is even and let $b=a/2$. Let $B=\{
t<\tau_a \dvtx x_t = b\}$, that is, the set of times where the walk visits
$b$ before hitting $a$.
\begin{lemma} \label{exp} For every $m\in\NN$,
\[
\Prob(|B| > m \mid \tau_a < \tau_0) < 2 e^{-2m/a}.
\]
\end{lemma}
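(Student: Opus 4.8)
The plan is to decompose the walk into excursions away from $b$ and apply the strong Markov property. Write $T:=\tau_a\wedge\tau_0$ for the first positive time at which the walk lies in $\{0,a\}$. Since simple random walk on $\ZZ$ is recurrent, $T<\infty$ almost surely, exactly one of $\tau_a<\tau_0$ and $\tau_0<\tau_a$ holds, and $\{\tau_a<\tau_0\}=\{T=\tau_a\}$. Because $0<b<a$, any path from $0$ to $a$ passes through $b$, so on $\{\tau_a<\tau_0\}$ the walk visits $b$ before time $T$, and conditioning on this event amounts to counting the visits of the walk to $b$ during $[0,T)$.

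First I would record the escape probabilities of the walk started at $b$. A single step moves to $b\pm1$; by gambler's ruin on an interval, from $b+1$ the walk reaches $a$ before returning to $b$ with probability $1/(a-b)=2/a$, and from $b-1$ it reaches $0$ before returning to $b$ with probability $1/b=2/a$ (an excursion cannot pass $b$ without first hitting $b$, so these are the relevant two-sided probabilities). Hence, started from $b$, the walk returns to $b$ before hitting $\{0,a\}$ with probability $q:=1-2/a$, and, conditionally on hitting $\{0,a\}$ before returning to $b$, it hits $a$ rather than $0$ with probability exactly $1/2$, by the reflection symmetry of $[0,a]$ about $b$.

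Then I would run the excursion argument itself. Let $R$ be the event that the walk reaches $b$ before time $T$; thus $\{\tau_a<\tau_0\}\subseteq R$. Conditioned on $R$, the successive excursions of the walk from $b$ are i.i.d., each one returning to $b$ with probability $q$ and otherwise escaping to $\{0,a\}$ and terminating the walk; hence $|B|$ is geometric, $\Prob(|B|=k\mid R)=q^{k-1}(1-q)$ for $k\ge1$. The event $\{\tau_a<\tau_0\}$ is precisely the event that the last (escaping) excursion goes up, which, given $R$ and $|B|=k$, has probability $\tfrac12$ for every $k$ by the previous step. Therefore $|B|$ and $\{\tau_a<\tau_0\}$ are conditionally independent given $R$, and
$$
\Prob\bigl(|B|>m\bigm|\tau_a<\tau_0\bigr)=\Prob\bigl(|B|>m\bigm|R\bigr)=\sum_{k>m}q^{k-1}(1-q)=q^m=(1-2/a)^m\le e^{-2m/a}<2\,e^{-2m/a},
$$
where the final inequalities use $1-x\le e^{-x}$; in fact a factor of $2$ is to spare.

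The one place that requires care — and the main, if modest, obstacle — is making the conditional-independence assertion rigorous. One must set up the excursion decomposition so that a visit to $b$ that would occur ``after'' a downward escape is not counted (this is legitimate: such an escape already reaches $0$, so $T$ has already occurred), and then invoke the strong Markov property at the successive returns of the walk to $b$ to see that, given $R$ and $|B|=k$, the direction of the final excursion is up with probability $\tfrac12$ for every $k\ge1$. Granting that, the rest is the elementary gambler's-ruin computation displayed above.
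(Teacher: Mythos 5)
Your proof is correct and follows essentially the same route as the paper: decompose the walk at $b$ into excursions, use gambler's ruin to get the escape probability $2/a$ per visit, and account for the conditioning on $\{\tau_a<\tau_0\}$. The only difference is cosmetic: the paper bounds $\Prob(|B|>m,\,\tau_a<\tau_0\mid\tau_b<\tau_0)$ and divides by $\Prob(\tau_a<\tau_0\mid\tau_b<\tau_0)=\tfrac12$ (hence the factor $2$), whereas your conditional-independence observation (the escape direction is up with probability $\tfrac12$ independently of the number of returns) gives the exact geometric law and the slightly sharper bound $(1-2/a)^m\le e^{-2m/a}$.
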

\begin{pf}
First, condition on $\tau_b < \tau_0$. Every time the walk visits
$b$, there is probability of $1/(b-1)$ that the walk
never returns to $b$ before hitting $\{0,a\}$. Therefore,
\[
\Prob(|B| > m, \tau_a<\tau_0 \mid \tau_b < \tau_0)
\le\biggl(1-\frac1{b-1}\biggr)^m
< \biggl(1-\frac2a\biggr)^m \le
e^{-2m/a}.
\]
Since $\Prob(\tau_a < \tau_0 \mid\tau_b < \tau_0) = 1/2$,
we get the extra factor of 2 in our bound.
\end{pf}

Note: these two lemmas apply also to lazy simple random walks. In
Lem\-ma~\ref{chernoff}, laziness only improves the bound, as it takes
longer to reach $a$. [One has to account for the change
in $\Prob(\tau_a<\tau_0)$, but this is rather minor.]
In Lemma \ref{exp}, the bound changes to $2
e^{-2Ck/a}$ with $C$ depending on the probability to stay in
place.

\subsection{Proof, continued}

Returning to our original setup, we use Lemma \ref{chernoff} to show
that different passes around $j$ tend to intersect each other.
We continue to assume that there is no integer $k$ such that
$j_-\le2^k/k^\alpha\le j_+$.
\begin{lemma} \label{intersect}
Let $\mathcal A_j(s)$ be the event that there is a pass
around $j$ starting at time $s$, and on $\mathcal A_j(s)$ let
$\tau$ be the final time of the pass.
Let $\{v_i \dvtx
i=j_-,\ldots,j-1\}$ be arbitrary points in $G$, where $v_i\in
V_i$. Then
\[
\Prob\bigl(\{Z_s,\ldots,Z_\tau\} \cap\{v_i \dvtx i=j_-,\ldots,j-1\} =
\varnothing\mid \mathcal A_j(s) \bigr) < C e^{-j^{\beta-{1/2}
-\varepsilon}}
\]
holds for any $\varepsilon>0$ and some $C$ depending on $\varepsilon$.
\end{lemma}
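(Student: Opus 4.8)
The plan rests on one structural observation. Throughout $[j_-,j_+]$ there is no change of layer size, so every layer there is a copy of a single $d$-regular expander $E=E_{2^k}$, and by the construction every edge of $G$ with both endpoints in this range, whether inside a layer or between two consecutive layers, corresponds to an edge of $E$ joining the two $E$-coordinates of its endpoints. Writing $Z_t=(X_t,Y_t)$ with $X_t$ the layer and $Y_t\in V(E)$ the position inside it, this means each step of $Z$ changes the layer by an independent uniform element of $\{-1,0,1\}$ and moves $Y_t$ to an independent, uniformly chosen $E$-neighbour; hence, as long as $X_t$ stays in $[j_-,j_+]$, the process $(Y_t)$ is a plain simple random walk on $E$ \emph{independent of the layer walk} $(X_t)$. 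In particular $\mathcal A_j(s)$, the time $\tau$, and the entire path $(X_t)$ are functions of $X$ alone, and conditionally on $X$ (and on $\mathcal A_j(s)$) the segment $(Y_t)_{s\le t\le\tau}$ is just an unconditioned $E$-walk. I will exploit this by letting the layer walk control how long the walk lingers below level $j$, and the expander do the hitting. Set $N:=2^k\asymp j(\log j)^\alpha$; since the $E_n$ are uniformly good expanders (which I take non-bipartite), a standard spectral estimate gives a mixing time $t_0\asymp\log N\asymp\log j$ with $P^{t_0}(y,y')\ge\frac1{2N}$ for all $y,y'\in V(E)$, where $P$ is the transition matrix of the $E$-walk. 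I may assume $\beta-\tfrac12-\epsilon>0$, as otherwise $Ce^{-j^{\beta-1/2-\epsilon}}\ge1$ already for $C=e$ and there is nothing to prove.

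\emph{Step 1: a one-dimensional estimate.} Put $d_0:=j-j_-\in[j^\beta,j^\beta+1]$, so the target layers $\{j_-+1,\dots,j-1\}$ are exactly those strictly between $j_-$ and $j$, and let $T_j:=\min\{t>s:X_t=j\}$; then $s<T_j\le\tau$ and $X_t$ lies in the target band for $s<t<T_j$. I claim that, conditionally on $\mathcal A_j(s)$, with probability at least $1-\tfrac12Ce^{-j^{\beta-1/2-\epsilon}}$ one has $T_j-s\ge j^{\beta+\frac12+\epsilon}$. Indeed, the typical value of $T_j-s$ is of order $j^{2\beta}$, far above $j^{\beta+\frac12+\epsilon}$ (here $\beta>\tfrac12+\epsilon$ is used), so a Chernoff-and-union-bound argument in the spirit of the proof of Lemma~\ref{chernoff} applies: one bounds the unconditioned probability $\Prob(T_j-s<t\mid X_s=j_-)\le C_1\,t\,e^{-c_1d_0^2/t}$, pays a factor $\asymp(j_+-j_-)^{-1}\asymp j^{-\beta}$ for the conditioning on $\mathcal A_j(s)$ (and a further polynomial factor, since excursions of $X$ below $j_-$ only lengthen the ascent to $j$), and takes $t=j^{\beta+\frac12+\epsilon}$, noting $d_0^2/t\asymp j^{\beta-1/2-\epsilon}$; the leftover polynomial prefactor and the constant $c_1<1$ are absorbed by running the whole estimate with $\epsilon/2$ in place of $\epsilon$. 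Call $\mathrm G:=\{T_j-s\ge j^{\beta+\frac12+\epsilon}\}$; it is $X$-measurable.

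\emph{Step 2: the expander hits.} Condition on $X$, with $\mathrm G$ holding. With $e_0:=s$ and $e_\ell:=s+\ell t_0$, call an index $\ell\ge1$ with $e_\ell\le\tau$ \emph{active} if $X_{e_\ell}$ lies in the target band; the active set is then determined by $X$, and since on $\mathrm G$ every $\ell$ with $e_\ell<T_j$ is active, it has at least $(T_j-s)/t_0-1\gtrsim j^{\beta+\frac12+\epsilon}/\log j$ elements. For active $\ell$, let $w_\ell\in V(E)$ be the $E$-coordinate of $v_{X_{e_\ell}}$, so that $\{Y_{e_\ell}=w_\ell\}$ forces $Z_{e_\ell}=v_{X_{e_\ell}}\in\{v_i:\,j_-\le i\le j-1\}$. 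Revealing $Y$ along $e_1,e_2,\dots$, the Markov property of the $E$-walk and its independence of $X$ give, for every active $\ell$,
\[
\Prob\bigl(Y_{e_\ell}=w_\ell \,\big|\, X,\,Y_{e_0},\dots,Y_{e_{\ell-1}}\bigr)=P^{t_0}\!\bigl(Y_{e_{\ell-1}},w_\ell\bigr)\ge\frac1{2N},
\]
whatever the past and whatever the (arbitrary) points $v_i$; multiplying over the active indices,
\[
\Prob\bigl(\{Z_s,\dots,Z_\tau\}\cap\{v_i\}=\emptyset \,\big|\, X\bigr)\le\Bigl(1-\frac1{2N}\Bigr)^{\#\{\text{active }\ell\}}\le\exp\!\Bigl(-\frac{\#\{\text{active }\ell\}}{2N}\Bigr)\le\exp\!\Bigl(-c\,\frac{j^{\beta+\frac12+\epsilon}}{N\log j}\Bigr).
\]
Since $N\asymp j(\log j)^\alpha$, the right side equals $\exp\bigl(-c\,j^{\beta-1/2+\epsilon}/(\log j)^{\alpha+1}\bigr)\le e^{-j^{\beta-1/2-\epsilon}}$ for $j$ large. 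Averaging over $X$ on $\mathrm G$ and
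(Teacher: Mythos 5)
Your argument is correct and follows essentially the same route as the paper's proof: a Chernoff-type lower bound on the time the pass spends below level $j$ (paying a polynomial factor for the conditioning on $\mathcal A_j(s)$), followed by sampling the expander coordinate once every mixing time $\asymp \log j$ to obtain $\gtrsim j^{\beta+1/2+\epsilon}/(N\log j)$ chances, each of probability $\gtrsim 1/N$, of hitting the prescribed vertices. Your explicit factorization of $Z_t=(X_t,Y_t)$ into independent layer and expander walks is a clean way of phrasing what the paper does implicitly, and the truncated final step (combining the estimate on $\mathrm{G}^c$ with the conditional bound on $\mathrm{G}$) is routine.
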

\begin{pf}
Consider only the part of the pass until the first time
$\tau'\ge s$ when it it first hits $V_j$.
By Lemma \ref{chernoff} we get that the conditional
probability [given $\mathcal A_j(s)$] that $\tau'-s < j^{\beta+1/2}$
is at most $O(1) j^{2 \beta
+1/2} \exp(-{j^{\beta- 1/2}/4})$.

In the time range $t\in\{s,s+1,\ldots,\tau\}$, the
walk $Y_t$ is a simple random walk on $E_{2^k}$, where,
by assumption $2^k/k^\alpha
\le j_- < j <j_+ \le2^{k+1}/(k+1)^\alpha$. By the mixing property of
the expanders we chose, there is some $C>0$ such that the distribution
of the walk after $C k \asymp C \log j$ steps is $j^{-2}\asymp
2^{-2k}$-close (in total variation) to uniform. Therefore, the
probability of being at any specific vertex is at least $\half
2^{-k}$. This holds conditional on the entire history of the walk
except for the last $C \log j$ steps.

Therefore, every $C \log j$ steps the walk has a probability of at
least $2^{-k-1}$ of intersecting $\{v_i \mid i=j_-,\ldots,j-1\}$
(conditional on $X_t$ to be between $j_-$ and $j$ in this range).
Thus, the probability of not intersecting $\{v_i \mid
i=j_-,\ldots,j-1\}$ until time $j^{\beta+1/2}$ is bounded by
$(1-2^{-k-1})^{j^{\beta+1/2}/ C \log j}$. Since $2^{k} \asymp j
\log^\alpha j$, we get a bound of $O(1) \exp(-{j^{\beta- 1/2}/C
\log^{\alpha+1} j})$.

Both this\vspace*{1pt} probability and $\Prob(\tau'-s < j^{\beta+1/2})$ are
asymptotically smaller then $\exp(-{j^{\beta- 1/2 - \varepsilon}})$. Thus,
we get the required bound.
\end{pf}

The same conclusion also applies to a set of points $\{v_i \mid
i=j+1,\ldots,j_+\}$ on the other side of $j$. Let
$\tau_j=\min\{t \mid X_t=j\}$ and $\sigma_j=\max\{t\mid X_t=j\}$ be the
first and last visits to $V_j$.
\begin{corollary}
Conditional on $\{Z_0,\ldots,Z_{\tau_j}\}$ and
$\{Z_{\sigma_j},\ldots\}$ an independent pass around $j$ intersects both
with probability at least $1-C e^{j^{\beta-{1/2}-\varepsilon}}$.
\end{corollary}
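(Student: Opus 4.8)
The plan is to deduce the corollary directly from Lemma~\ref{intersect} together with its stated mirror-image analogue for the points on the far side of $j$, by conditioning on the two tails of the trajectory and then comparing an independent pass around $j$ to those two frozen point sets. First I would observe that the hypothesis ``conditional on $\{Z_0,\ldots,Z_{\tau_j}\}$ and $\{Z_{\sigma_j},\ldots\}$'' precisely fixes a point $v_i\in V_i$ for every layer $i$ with $j_-\le i\le j-1$ (namely $v_i := Z_{\tau_j'}$ for an appropriate time before $\tau_j$ at which $X=i$; by definition of $\tau_j$ the walk must pass through every intermediate layer, and similarly for the tail $\{Z_{\sigma_j},\ldots\}$ on the layers $j+1,\ldots,j_+$). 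So from the point of view of the pass, the relevant information in the conditioning is exactly an arbitrary collection $\{v_i\}$ of the kind appearing in Lemma~\ref{intersect}.

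Next, I would invoke the fact that an \emph{independent} pass around $j$ is distributed exactly as the walk segment appearing in Lemma~\ref{intersect} conditioned on $\mathcal A_j(s)$ — that is, a delayed SRW on $\NN$ started at $j_-$, conditioned to hit $j_+$ before returning to $j_-$, coupled with the expander coordinate. Applying Lemma~\ref{intersect} gives that this independent pass fails to hit $\{v_i:i=j_-,\ldots,j-1\}$ with probability at most $Ce^{-j^{\beta-1/2-\epsilon}}$; applying the mirror statement gives the same bound for the set $\{v_i:i=j+1,\ldots,j_+\}$. A union bound over these two events yields that the pass hits both point sets, and hence intersects both $\{Z_0,\ldots,Z_{\tau_j}\}$ and $\{Z_{\sigma_j},\ldots\}$, with probability at least $1-2Ce^{-j^{\beta-1/2-\epsilon}}$; absorbing the factor $2$ and relabelling $\epsilon$ gives the stated bound $1-Ce^{-j^{\beta-1/2-\epsilon}}$ (the sign of the exponent in the displayed corollary is evidently a typo for $-j^{\beta-1/2-\epsilon}$).

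I do not expect a serious obstacle here; the corollary is essentially a bookkeeping repackaging of Lemma~\ref{intersect}. The one point that needs a little care is the claim that the two tails $\{Z_0,\ldots,Z_{\tau_j}\}$ and $\{Z_{\sigma_j},\ldots\}$ really do supply a full family $\{v_i\}$, one vertex per layer in the relevant ranges — this uses only that the layer coordinate $X_t$ changes by at most one per step, so that reaching or leaving layer $j$ forces a visit to each layer in between. A second, very minor point is keeping the conditioning honest: we should note that Lemma~\ref{intersect} was proved for \emph{arbitrary} fixed $\{v_i\}$, so it applies uniformly and the bound survives integration over the (random, but independent of the pass) values of the $v_i$ determined by the conditioning.
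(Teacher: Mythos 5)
Your proposal is correct and follows essentially the same route as the paper: the paper's proof consists precisely of the observation that $\{Z_0,\ldots,Z_{\tau_j}\}$ contains a vertex of each $V_i$ for $i=j_-,\ldots,j-1$ and $\{Z_{\sigma_j},\ldots\}$ one of each $V_i$ for $i=j+1,\ldots,j_+$, after which Lemma~\ref{intersect} and its mirror apply to the independent pass. Your additional remarks (the union bound, the uniformity of Lemma~\ref{intersect} over the fixed points $v_i$, and the sign typo in the exponent) are accurate bookkeeping that the paper leaves implicit.
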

\begin{pf}
These two sets each contain at least one element of each $V_i$ for
$i=j_-,\ldots,j-1$ and $i=j+1,\ldots,j_+$.
\end{pf}

To conclude the proof, we just need to show, using Lemma \ref{exp},
that the probability of the random walk to hit a specific point
during a pass is low.
\begin{lemma}\label{pass_through}
Let $v$ be an arbitrary point in $V_j$. With the notation of
Lemma \ref{intersect}, we have
\[
\Prob\bigl(v \in\{Z_s,\ldots,Z_\tau\}\mid \mathcal A_j(s), Z_s
\bigr) < C j^{\beta-1}
\]
for some constant $C$.
\end{lemma}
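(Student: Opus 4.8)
The plan is to condition on the entire layer-trajectory $(X_s,X_{s+1},\dots,X_\tau)$ of the pass and then exploit the product-like structure of $G$. Under the standing assumption of Lemma~\ref{intersect} (no integer $k$ with $j_-\le 2^k/k^\alpha\le j_+$), every vertex of every layer in the range $[j_-,j_+]$ has exactly $d$ neighbours in each of the three adjacent layers, so a uniform random neighbour of $Z_t=(X_t,Y_t)$ is obtained by choosing the layer increment in $\{-1,0,1\}$ uniformly and, \emph{independently}, letting $Y_{t+1}$ be a uniform $E_{2^k}$-neighbour of $Y_t$. Consequently, throughout the pass the coordinate $(Y_t)$ performs a simple random walk on the expander $E_{2^k}$, and this walk is independent of the increments of $X$. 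In particular, conditionally on $\mathcal A_j(s)$ (an event measurable with respect to the layer process $X$), on $Z_s$ (which fixes $X_s=j_-$ and $Y_s$), and on the whole layer-trajectory of the pass, the sequence $(Y_t)_{s\le t\le\tau}$ is simply a simple random walk on $E_{2^k}$ started from $Y_s$.

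Next I would control $N:=\#\{s<t<\tau:X_t=j\}$, the number of visits to layer $j$ during the pass. Given $\mathcal A_j(s)$, the path $(X_s,\dots,X_\tau)$ is a delayed simple random walk on $\ZZ$ started at $j_-$ and conditioned to reach $j_+$ before returning to $j_-$, and $j$ is, up to an additive error of at most $1$, the midpoint of an interval of half-width $\asymp j^\beta$. Lemma~\ref{exp} (in its lazy form, or a trivial variant of it to absorb the off-by-one) then yields $\Prob(N>m\mid\mathcal A_j(s))<Ce^{-cm/j^\beta}$ for suitable constants $c,C>0$; in particular $E(N\mid\mathcal A_j(s))\le C'j^\beta$, and this bound does not depend on $Z_s$.

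To combine the two ingredients, observe that on $\{Z_t=v\}$ we have $X_t=j$, and since $X$ changes by at most one per step and must climb from $j_-$ up to $j$, every such $t$ satisfies $t-s\ge j-j_-\ge j^\beta$, which dwarfs the mixing time $O(\log j)$ of $E_{2^k}$. Hence the mixing estimate already established in the proof of Lemma~\ref{intersect} gives $\Prob(Y_t=v\mid Y_s)\le 2\cdot 2^{-k}$ for each such $t$, and therefore
\[
\Prob\bigl(v\in\{Z_s,\dots,Z_\tau\}\bigm|\mathcal A_j(s),Z_s\bigr)
\le E\Bigl(\sum_{\substack{s<t<\tau\\ X_t=j}}\Prob(Y_t=v\mid Y_s)\ \Big|\ \mathcal A_j(s),Z_s\Bigr)
\le 2\cdot 2^{-k}\,E(N\mid\mathcal A_j(s))\le 2C'j^\beta 2^{-k}.
\]
Since $2^k\asymp j\log^\alpha j$, the right-hand side is $\asymp j^{\beta-1}\log^{-\alpha}j$, which is $<Cj^{\beta-1}$ (after enlarging $C$ to absorb finitely many small $j$).

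The one delicate point, and where I would be most careful, is the very first claim: that after conditioning on the \emph{whole} $X$-trajectory of the pass, the $Y$-coordinate remains an \emph{un}conditioned simple random walk on $E_{2^k}$. This uses both the uniform local product structure of $G$ inside a single same-size block of layers and the independence of the $X$- and $Y$-increments, so that conditioning on the (partly future) layer-path biases neither the $Y$-increments nor the starting point $Y_s$. Everything else is a routine combination of Lemma~\ref{exp} with the expander mixing bound already recorded.
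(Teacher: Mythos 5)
Your proof is correct and takes essentially the same route as the paper: control the number of visits to $V_j$ during the pass via Lemma~\ref{exp}, and use expander mixing (legitimate because reaching $V_j$ from $V_{j_-}$ takes at least $j^\beta \gg \log j$ steps) to bound the chance that any single visit lands on $v$, yielding a bound of order $j^\beta\cdot 2^{-k}\asymp j^{\beta-1}\log^{-\alpha}j$. The only cosmetic difference is that you work with the expected number of visits after making the conditional independence of the $Y$-coordinate from the layer trajectory explicit, whereas the paper truncates the visit count at $C_1 j^\beta\log j$ with a polynomially small tail and applies a union bound over those times.
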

\begin{pf}
Let $B=\{t_1<\cdots<t_m\}$ be the set of times between $s$ and $\tau$
that the walk is in $V_j$. By Lemma \ref{exp}, we have
%
%
\begin{equation}\label{e.m}
\Prob(m > C_1 j^\beta
\log j ) < 2 j^{-2C_1}.
\end{equation}
Obviously, $t_i - s \ge j^\beta$ for any $i$, that is, the random walk
took at least $j^\beta$ steps before reaching $V_j$. By the mixing
property of the expanders we chose, there is some $C_2>0$ such that the
distribution on $Y_{s+j^\beta}$, conditioned on the history until time
$s$, is $e^{-C_2 j^\beta}$-close (in total variation) to uniform.
Since the distance to the uniform distribution can only decrease, we
have, for any $i$
\[
\Prob\bigl(Z_{t_i}=v \mid \mathcal A_j(s), Z_s \bigr) < |V_j|^{-1} +
e^{-C_2 j^\beta} < C_3 j^{-1}\log^{-\alpha} j.
\]
Combining with (\ref{e.m}) yields
\begin{eqnarray*}
\Prob\bigl(v\in B \mid \mathcal A_j(s), Z_s \bigr) &\le& \Prob(m >
C_1 j^\beta\log j ) + \sum_{i=1}^{C_1 j^\beta\log j} \Prob
\bigl(Z_{t_i}=v \mid \mathcal A_j(s), Z_s \bigr)
\\
&\le&2 j^{-2C_1} + C_1 j^\beta\log j C_3 j^{-1}\log^{-\alpha}j \le C
j^{\beta-1}
\end{eqnarray*}
for a proper choice of $C_1$.
\end{pf}

We now argue that our above conclusions also apply when there is some
$k\in\NN$ satisfying $j_-\le2^k/k^{\alpha}\le j_+$. For $j$ large,
there is clearly at most one such $k$. Let $\widetilde j$ be the value
of $\lfloor2^k/k^\alpha\rfloor$, that is, between $j_-$ and $j_+$. The
argument used in the proof of Lemma \ref{intersect} can just be applied
to the set $\{v_i\dvtx i_0\le i\le i_1\}$, where $j_-\le i_0\le i_1\le
j-1$, $i_1-i_0$ is proportional to $j^\beta$ and $\widetilde
j\notin[i_0,i_1]$. The next issue is that $X_t$ does not behave like a
martingale when in the range\vspace*{1pt} $[j_-,j_+]$. However, if we
define $g(i)=i$ for $i\le\widetilde j$ and $g(i)=\widetilde
j+(i-\widetilde j)/2$ for $i\ge \widetilde j$, then $g(X_t)$ behaves as
a martingale while $X_t\in[j_-,j_+]$, and the analogue of Lemma
\ref{exp} holds with easy modifications to the proof. Finally, it is
easy to adapt the proof Lemma \ref{pass_through} as well. The crucial
point here is that the edges connecting $V_{\widetilde j}$ and
$V_{\widetilde j +1}$ maintain the uniform distribution. In other
words, as the random walk passes from $V_{\widetilde j}$ to
$V_{\widetilde j +1}$ (or vice verse) its distribution can only get
closer to uniform. Therefore, we can safely ignore the steps between
these layers when calculating the distance to uniform. Since there are
plenty of steps to spare, the analysis remains valid.

Putting it all together we get:
\begin{corollary}
If $j$ is linked and $v\in V_j$, then
\[
\Prob( v \mbox{ is a cutpoint}) < C j^{M(\beta-1)}.
\]
\end{corollary}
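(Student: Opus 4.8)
The plan is to combine the three estimates established above---the intersection lemma (Lemma~\ref{intersect} and its corollary) and the pass-through lemma (Lemma~\ref{pass_through})---over the $\ge M$ disjoint passes around $j$ guaranteed by the hypothesis that $j$ is linked. The key observation is that for $v\in V_j$ to be a cutpoint of $Z_t$, it is \emph{necessary} that $v$ disconnect $\{Z_0,\dots,Z_{\tau_j}\}$ from $\{Z_{\sigma_j},\dots\}$ within the path. In particular, every pass around $j$ must either pass through $v$ itself, or fail to connect the part of the path before $\tau_j$ to the part after $\sigma_j$ through vertices other than $v$. I would formalize this: if $v$ is a cutpoint, then for \emph{each} of the $M$ passes, that pass either contains $v$, or is disjoint from at least one of $\{Z_0,\dots,\tau_j\}\cap(\bigcup_{i=j_-}^{j-1}V_i)$ and $\{Z_{\sigma_j},\dots\}\cap(\bigcup_{i=j+1}^{j_+}V_i)$ (using that these hitting/last-exit sets each meet every intermediate layer, as in the Corollary).

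The main step is then a conditioning argument that makes the $M$ passes behave independently. As noted in the lead-up to Lemma~\ref{linked}, conditioning on $Q(j)$ (equivalently, on the behaviour of the walk outside the passes, which includes $\{Z_0,\dots,Z_{\tau_j}\}$ and $\{Z_{\sigma_j},\dots\}$ once $j$ is linked) leaves each pass distributed as an independent delayed SRW bridge from $j_-$ to $j_+$ conditioned to avoid $j_-$ until reaching $j_+$---precisely the object analyzed in Lemmas~\ref{intersect} and~\ref{pass_through}. So I would fix the target points $v_i$ to be one representative in each layer drawn from the conditioned before-set and after-set, fix $v$, and conclude that, conditionally, each pass independently fails to be ``cut-compatible'' with probability at least $1-C(e^{-j^{\beta-1/2-\epsilon}}+j^{\beta-1})\ge 1-C' j^{\beta-1}$ for $j$ large, using that the exponential term is negligible against the polynomial one. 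Raising to the power $M$ gives the bound $\Prob(v\text{ is a cutpoint}\mid Q(j))\le (C'j^{\beta-1})^M = C'' j^{M(\beta-1)}$, and taking expectations over $Q(j)$ on the event that $j$ is linked yields the stated inequality.

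The hard part---and the place where care is genuinely needed---is the bookkeeping of what exactly each pass must avoid in order for $v$ to remain a cutpoint, because the before-set $\{Z_0,\dots,Z_{\tau_j}\}$ and the after-set $\{Z_{\sigma_j},\dots\}$ are themselves functions of the conditioning $\sigma$-field, so one must check that conditionally on $Q(j)$ these sets are determined while the passes are still fresh independent bridges. One must also handle the logical structure carefully: a pass that goes through $v$ is ``harmless'' for disconnection, so the relevant bad event per pass is ``(passes through $v$) OR (misses the before-set in layers $j_-,\dots,j-1$) OR (misses the after-set in layers $j+1,\dots,j_+$)'', and it is the probability of the \emph{complement} of this---a pass that avoids $v$ yet links both sides---that we bounded below by $1-Cj^{\beta-1}$; since the passes are conditionally independent, the probability that \emph{all} $M$ passes are bad is at most the product, which is $\le (Cj^{\beta-1})^M$. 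Finally I would invoke the discussion just given (the $g$-martingale substitution and the remark about the edges between $V_{\tilde j}$ and $V_{\tilde j+1}$ preserving uniformity) to note that all of this goes through unchanged when some $2^k/k^\alpha$ falls in $[j_-,j_+]$, and set $C:=C''$ to obtain $\Prob(v\text{ is a cutpoint})<Cj^{M(\beta-1)}$.
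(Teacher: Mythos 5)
Your proposal is correct and follows essentially the same route as the paper: each of the $M$ conditionally independent passes connects $\{Z_0,\dots,Z_{\tau_j}\}$ to $\{Z_{\sigma_j},\dots\}$ while avoiding $v$ except on an event of probability at most $C(e^{-j^{\beta-1/2-\epsilon}}+j^{\beta-1})\le C'j^{\beta-1}$, and $v$ can be a cutpoint only if all $M$ passes fail, giving the product bound $C''j^{M(\beta-1)}$. The paper's own proof is just a two-line version of this; your additional care about the conditioning $\sigma$-field and the per-pass bad event is a faithful (and more explicit) rendering of what the paper compresses into ``regardless of the history of the walk.''
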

\begin{pf}
Each pass around $j$ connects $\{Z_0,\ldots,Z_{\tau_j}\}$ and
$\{Z_{\sigma_j},\ldots\}$ without passing through $v$ with
probability at least $1-C j^{\beta-1}$, regardless of the history of the
walk. Thus, the probability that every one of the $M$ passes fails
to do so is bounded by $C j^{M(\beta-1)}$.
\end{pf}

Now, for $\half< \beta< 1$ and $M>2/(1-\beta)+2$, the expected
number of cutpoints in any $V_j$ for linked $j$ is finite. Since all
but finitely many layers are linked, the theorem is proved.

\section{Other graphs with finitely many cutpoints} \label{horns}

The examples provided by Theorem \ref{example} are perhaps not the most
natural ones. Are there simpler examples exhibiting this phenomenon?

There are. In fact, we claim that a suitably chosen subgraph of
$\ZZ^d$, for $d\ge3$, is such an example. Given a function
$f\dvtx\RR^+\ra\RR^+$, define the \textit{horn} of $f$ in $\ZZ^d$ to be
\[
H^d_f = \{(x_1,x_2,\ldots,x_d)\in\ZZ^d ; x_1 \ge0 ,
x_2^2+\cdots+x_d^2 \le f^2(x_1) \} .
\]

That is, the part of the positive half space where the distance to the
$x_1$-axis is less then $f(x_1)$. Taking $f=\sqrt[d-1]{x
\log^\alpha(x)}$, for $\alpha>1$, we get a ``barely transient'' graph,
similar to our original construction. The layers in this graph are
sets\vspace*{1pt} of points having the same $x_1$ coordinate. The
size\vspace*{1pt} of the $i$th layer is roughly $ f^{d-1}(i)= i
\log^\alpha(i)$. Standard arguments can be used to construct a flow in
$H^d_f$ from the origin to infinity having finite energy, thus showing
that this graph is transient.

The difference\vspace*{1pt} between $H^d_f$ and our previous example is twofold:
the layers are connected differently, and the layers themselves are
obviously not expanders, but some subset of $\ZZ^{d-1}$ instead.
Below is an outline of how to deal with these differences.

First, since the layers are not even regular, we cannot separate the
horizontal movement (along the $x_1$ axis) from the vertical (all
other directions). In order to prove Lemma \ref{linked} in this
case, one has to give some bounds on the minimal and maximal
probability of escape from layer $j$ (minimal and maximal w.r.t. the
location inside the layer). The argument of Lemma \ref{linked} is
rather robust, so the proof should be adaptable.

Second, since the layers are not expanders, the walk on them does
not mix as rapidly, which interferes with the proof of
Lemma \ref{intersect}. The
mixing time of layer $i$ in $H^d_f$ is of order $ f^2(i)= (i
\log^\alpha(i) )^{2/(d-1)}$. If $d\ge4$, then this is less then
$i^{2/3+\varepsilon}$. Going through the proof of Lemma \ref{intersect}
we see that one can get a bound of order $ e^{-j^{\beta-5/6-\varepsilon
}}$ in
this case, which is enough to proceed with the rest of the proof
when $5/6<\beta$.

What about $d=3$? The proof as written does not work since the
mixing time of layer $i$ is now more then $i$. However, the proof of
Lemma \ref{intersect} did not use the mixing of our random walk
optimally. We only sampled the walk once every mixing time steps and
ignored the rest of the steps. For $d=3$, one needs to improve on
that by first proving that if we have an $n\times n \times n$ cube,
consisting of $n$ layers, with at least one marked vertex in each
layer, then the probability of a simple random walk, started somewhere
in the middle layer,
to visit one of the marked vertices before reaching the first or last
layer, decays only logarithmically in $n$.\vspace*{1pt}

Since\vspace*{1pt} layer $i$ is roughly $\sqrt{i \log^\alpha(i)}$ by
$\sqrt{i \log^\alpha(i)}$, and the pass length
is $i^\beta$, which we may take to be bigger than
$\sqrt{i\log^\alpha(i)}$, the random walk would have more than
$i^{\beta- {1/2}- \varepsilon}$ opportunities to intersect the marked
vertices (i.e., previous passes), which yields an exponentially small
probability of failing to do so.
Of course, to prove this in full detail, one would have to also deal
with the behavior of the walk near the boundary of the layers, which
definitely would add significant complications. We do not pursue this here.

\section{\texorpdfstring{Proof of Theorem \protect\ref{infinity}}{Proof of Theorem 2}}

Next, we prove that even though the number of cutpoints can be
finite a.s., its expectation is always infinite. This is true for
any transient Markov chain, not necessarily reversible.

Let $X_i$ be a transient Markov chain, $S$ its state space and $T$
the transition probability matrix.

Define $f(s)$ to be the probability that a chain with the same law,
started at $s$, will ever visit $X_0$ (the starting state of $X$).
This function is harmonic for all $s\neq X_0$. Therefore, $f(X_i)$
is a martingale, as long as $X_i\neq X_0$.

First, we deal with the special case when the chain is irreducible.
In that case, $f$ is positive everywhere, that is, there is a positive
probability of returning to $X_0$ from any vertex.

The chain is transient, thus $\lim_{i\ra\infty} f(X_i) =0$, almost
surely. Let $M_n$ be the sequence of minima of $f(X_i)$ and $i_n$
the times in which these minima are achieved. More precisely,
$i_{n+1}=\min\{i \mid i>i_n , f(X_i)<f(X_{i_n})\}$ and
$M_n=f(X_{i_n})$. This sequence is infinite since $f(X_i)>0$ due to
irreducibility.

Given $i_{n-1}$ and $i_n$, let $j_n=\min\{j \mid j>i_n , f(X_j)\ge
M_{n-1}\}$, which is the first time $j\ge i_n$ at which the value of $f(X_j)$
exceeds the previously obtained minimum, or infinity if this never happens.
Note that $j_n$ is a stopping time. By applying the optional
stopping theorem, together with the positivity of $f$, we get
$E(f(X_{j_n}) \mid M_n) \le M_n$, where we take
$f(X_\infty)=\lim_{j\ra\infty} f(X_j)=0$. By definition,
$f(X_{j_n})\ge M_{n-1} > M_n$ if $j_n < \infty$. Therefore, $P(j_n <
\infty\mid M_n , M_{n-1}) \le\frac{M_n}{M_{n-1}}$.

Notice that if $j_n = \infty$ then $i_n$ must be a cut-time (and
$X_{i_n}$ a cutpoint), since $f(X_i)\ge M_{n-1}$ for $i<i_n$ and
$f(X_i)<M_n$ for $i\ge i_n$. Thus, given $M_{n-1}$ and $M_n$ the
probability that $i_n$ is a cut-time is at least
$1-\frac{M_n}{M_{n-1}}$.

Recall that $M_n$ is a monotone decreasing sequence, tending to 0.
For any such sequence, we have $\sum_{n=1}^\infty
(1-\frac{M_n}{M_{n-1}}) = \infty$, since $\prod_{n=1}^\infty
\frac{M_n}{M_{n-1}} = 0$. Putting it all together, we get
\begin{eqnarray*}
\sum_{n=1}^\infty P(i_n \mbox{ is a cut-time})
&=& \sum_{n=1}^\infty E\bigl(P(X_{i_n} \mbox{ is a cut-time} \mid
M_n,M_{n-1})\bigr)
\\
&\ge&\sum_{n=1}^\infty E\biggl(1-\frac{M_n}{M_{n-1}}\biggr) = E
\Biggl(\sum_{n=1}^\infty\biggl(1-\frac{M_n}{M_{n-1}}\biggr)\Biggr) \\
&=& \infty.
\end{eqnarray*}

What happens if our chain is not irreducible?
In that case the state
space can be decomposed into irreducible components. These are
equivalence classes of the equivalence relation consisting of pairs $(x,y)$
for which one can get from $x$ to $y$ with positive probability (in
possibly more
than one step), and one can get from $y$ to $x$ with positive probability.

If there is positive probability that the chain eventually stays in
some fixed
equivalence class $S$, then we may consider for some $x\in S$ the
probability to get to~$x$, and the previous proof applies to show
that the expected number of cut-times is infinite. Otherwise
the number of cut-times is infinite almost surely, because each transition
into a new equivalence class is necessarily a cut time.

\section{Bounding the resistance of the path} \label{resistance}

Even though the path of a simple random walk might have only
finitely many cutpoints, it is a recurrent subgraph of~$G$, as shown
in \cite{trace}. In other words, the resistance of the path, from
any vertex to infinity, is infinite. Here we provide a bound on the
rate of increase of the resistance, useful mostly when $G$ is of
bounded degree. The proof uses the technique of \cite{trace},
combined with ideas from \cite{path}. For the sake of completeness,
we reproduce the relevant lemmas from \cite{path} and \cite{trace}.

We follow the definitions in \cite{trace}, adapted to finite graphs.
Let $G$ be a finite graph, with two marked vertices, $X_0$ and
$Y_0$. Let $X_i$ be a simple random walk on $G$, started at $X_0$
and stopped when hitting $Y_0$. Let $v(x)$ be the probability of a
simple random walk on $G$, started at $x$, to hit $X_0$ before
$Y_0$. Let $s=\max\{v(y)\dvtx{y \sim Y_0} \}$ and let
$d=\max\{ \deg(x)\dvtx{x\ne Y_0}\}$.

Denote by $C_{\eff}(v \lra u ; H)$ the effective conductance between
$v$ and $u$ in the network $H$. Let $\PATH$ be the subgraph of $G$
consisting of all the edges the random walk crossed before hitting
$Y_0$. We would like to bound the conductance of $\PATH$ from one end
to the other.
\begin{theorem} \label{conductance}
\[
E\bigl(C_{\eff}(X_0 \lra Y_0 ; \PATH)\bigr) \le\frac{12 \log
(d)}{\log(1/s)} .
\]
\end{theorem}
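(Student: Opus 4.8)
The plan is to control the expected effective conductance of $\PATH$ by comparing the random walk's trajectory with an auxiliary flow built from the walk itself, following the strategy of \cite{trace}. The key observation is that $v$ is harmonic on $G\setminus\{X_0,Y_0\}$ and equals $1$ at $X_0$, $0$ at $Y_0$, so $v(X_i)$ is a bounded martingale up to the hitting time of $Y_0$; in particular, the level sets of $v$ act as natural ``equipotential'' surfaces. I would partition the interval $(0,1]$ (or rather $(s,1]$, since $v$ never exceeds $s$ on neighbors of $Y_0$ at the last step) into dyadic-type blocks $(2^{-(m+1)}, 2^{-m}]$, and for each block record whether the walk, during the portion of its life when $v(X_i)$ lies in that block, ever closes a ``loop'' — i.e. makes an excursion that the path-geometry can exploit as a short-cut. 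The number of such blocks is of order $\log(1/s)$, which is where that factor enters the denominator.

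The first main step is to reproduce the two ingredients mentioned: the lemma from \cite{trace} that bounds conductance of a trace in terms of the number of times the walk crosses a collection of disjoint ``cuts'' (this is essentially a parallel-law / Nash–Williams estimate applied pathwise), and the lemma from \cite{path} that says the walk, run until it exits a region between two potential levels differing by a factor of $2$, has a uniformly positive chance (bounded below independently of the graph) of returning to the higher level, and hence of creating the requisite loop. Combining these: within each dyadic potential block the walk either exits it ``downward'' having created at least one usable loop with probability $\ge c$ for some absolute constant $c$, or it fails. By the optional stopping theorem applied to $v(X_i)$ across the block boundaries, the expected number of blocks in which a loop is created dominates a fixed fraction of the total number of blocks, i.e. is $\gtrsim \log(1/s)$.

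The second step is to convert ``many blocks each contributing a loop'' into an upper bound on $E(C_{\eff})$. Here I would use the series law: each block in which a loop occurs forces the effective resistance of the corresponding stretch of $\PATH$ to be at least the resistance of that loop's ``waist'', which one can bound below using that the degrees are at most $d$, so any cut separating the two potential levels inside $\PATH$ has bounded size and hence the crossing has resistance $\ge c'/\log d$ — this is exactly where $\log d$ appears. Summing the resistances of $\gtrsim \log(1/s)$ disjoint blocks gives $R_{\eff}(\PATH) \gtrsim \log(1/s)/\log(d)$ in expectation, and then Jensen's inequality (applied to $x\mapsto 1/x$, together with the fact that we actually get the bound in expectation on $R$, or by a more careful conditional argument) yields $E(C_{\eff}) \le 12\log(d)/\log(1/s)$, the constant $12$ coming from bookkeeping in the dyadic decomposition and the absolute constants $c, c'$.

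The step I expect to be the main obstacle is the loop-creation estimate and, more precisely, making the loops \emph{disjoint} across blocks so that their resistances genuinely add in series rather than merely being present somewhere in $\PATH$. The martingale $v(X_i)$ can oscillate within a block and even return to it after leaving, so one must be careful to define the blocks and the ``first downward exit'' stopping times so that the pieces of the path used in different blocks are edge-disjoint (or at least disjoint enough that a Nash–Williams-type bound still applies). Handling the boundary effects near $Y_0$ — where $v$ is already small, so only $O(\log(1/s))$ blocks exist and the last one may be incomplete — and the possibility that the walk hits $Y_0$ from ``high up'' without traversing all blocks, will require the factor $s$ rather than something uniform, and this is exactly why the bound degrades as $s\to 1$.
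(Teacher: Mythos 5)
Your proposal starts from the right objects (the level sets of the harmonic function $v$ and a series-law decomposition into $\Theta(\log(1/s)/\log d)$ slices), but two of its load-bearing steps fail. First, the final conversion is backwards: Jensen's inequality for the convex function $x\mapsto 1/x$ gives $E(R_{\eff})\ge 1/E(C_{\eff})$, so a lower bound on $E\bigl(R_{\eff}(X_0\lra Y_0;\PATH)\bigr)$ yields no upper bound on $E\bigl(C_{\eff}(X_0\lra Y_0;\PATH)\bigr)$ --- the resistance could be huge on most of the probability space yet tiny on a rare event, making the expected conductance large. (The paper applies Jensen only in the valid direction: it proves the upper bound on $E(C_{\eff})$ first and deduces the lower bound on $E(R_{\eff})$ as a corollary.) Second, the per-block ``loop creation'' mechanism is not the right one and is not substantiated: a loop in $\PATH$ creates parallel connections and if anything lowers resistance, and the claim that ``any cut separating two potential levels inside $\PATH$ has bounded size, hence resistance $\ge c'/\log d$'' is unjustified --- the number of $\PATH$-edges crossing a given level can be large (in $\ZZ^2$ a typical edge of $\PATH$ near the origin is traversed about $\log r$ times, as the paper itself points out). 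The factor $\log d$ actually enters through the spacing of the levels: since $v$ can drop by a factor of $d$ across a single edge, the level sets must be taken at thresholds $d^{-i}$, which leaves only about $\log(1/s)/\log d$ slices to put in series.

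The paper's route avoids any pathwise geometric statement. It weights each edge by $N(x,y)$, the number of traversals, notes $C_{\eff}(X_0\lra Y_0;\PATH)\le C_{\eff}(X_0\lra Y_0;G^N)$, and uses concavity of the effective conductance in the edge conductances to pass to the deterministic network $\bar G$ with conductances $E\bigl(N(x,y)\bigr)$. The key computation, via reversibility, is $E\bigl(N(x,y)\bigr)=(v(x)+v(y))\,C_{xy}/C_{\eff}(X_0\lra Y_0;G)$, which makes the effective conductance of each slice $G_i\lra G_{i+2}$ in $\bar G$ at most $4$; the series law over $\lfloor n/2\rfloor$ slices with $n=\lfloor\log(1/s)/\log d\rfloor$ then gives the stated bound. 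To repair your argument you would need either this expectation-level computation or a genuine high-probability (not merely expected) lower bound on the resistance of $\PATH$; the latter is a different and harder statement than the theorem asserts.
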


In fact, a stronger form of Theorem \ref{conductance} will be proved, where
the conductance of each edge of $\PATH$ is equal to the number of
times in
which the random walk used that edge.

Recall that the effective resistance is the reciprocal of the
effective conductance. Using the convexity of the function $1/x$ and
Jensen's inequality we get the following corollary.
\begin{corollary}
\[
E\bigl(R_{\eff}(X_0 \lra Y_0 ; \PATH)\bigr) \ge\frac{\log
(1/s)}{12 \log(d)} .
\]
\end{corollary}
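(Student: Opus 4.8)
The plan is to deduce the corollary directly from Theorem~\ref{conductance} via Jensen's inequality. The key point is that effective resistance and effective conductance are reciprocals, so that $R_{\eff}(X_0 \lra Y_0; \PATH) = 1/C_{\eff}(X_0 \lra Y_0; \PATH)$, together with the fact that $t \mapsto 1/t$ is convex on $(0,\infty)$.

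First I would record that on a finite graph the random variable $C := C_{\eff}(X_0 \lra Y_0; \PATH)$ is almost surely finite and strictly positive: it is finite because $\PATH$ is a finite network, and it is positive because the walk is stopped exactly when it first hits $Y_0$, so $\PATH$ always contains a path joining $X_0$ to $Y_0$ and hence has positive conductance between these two vertices. Consequently $R_{\eff} = 1/C$ is a well-defined nonnegative random variable, and moreover $E(C) < \infty$ by Theorem~\ref{conductance}. These facts are exactly what is needed to legitimately apply Jensen's inequality in the reciprocal form.

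Applying Jensen's inequality to the convex function $\varphi(t) = 1/t$ and the positive random variable $C$ then yields
$$
E\bigl(R_{\eff}(X_0 \lra Y_0; \PATH)\bigr) = E\bigl(1/C\bigr) \ge \frac{1}{E(C)} \ge \frac{1}{12\log(d)/\log(1/s)} = \frac{\log(1/s)}{12\log(d)}\,,
$$
where the second inequality is Theorem~\ref{conductance}. This completes the argument.

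I do not expect any genuine obstacle here: the substantive content is entirely contained in Theorem~\ref{conductance}, and the only point deserving a word of care is the finiteness and positivity of $C$, which guarantee that Jensen applies in the form $E(1/C) \ge 1/E(C)$; both are immediate on a finite graph with the stated stopping rule.
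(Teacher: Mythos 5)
Your proof is correct and is exactly the argument the paper uses: the corollary is stated right after noting that resistance is the reciprocal of conductance and invoking convexity of $1/x$ via Jensen's inequality. Your additional remarks on the finiteness and positivity of $C_{\eff}(X_0 \lra Y_0; \PATH)$ are a reasonable (if implicit in the paper) bit of care.
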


We shall now provide the lemmas necessary to proceed with the proof
of Theorem \ref{conductance}. Note that in $\PATH$ the degree of
$Y_0$ is always 1, since the random walk is stopped there.
Therefore, the conductance is always bounded by 1, so the bound is
interesting only when $s$ is small. Hence, we will assume that
$s<1/d$ for the rest of the proof.
\begin{lemma}
If $x$ and $y$ are adjacent vertices of $G\setminus\{Y_0\}$, then
$v(x)\le d v(y)$.
\end{lemma}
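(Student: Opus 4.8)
The plan is to exploit the fact that $v$ is harmonic off the two marked vertices: for every $y\notin\{X_0,Y_0\}$ a first-step decomposition of the walk, via the Markov property, gives the mean-value identity
$$
v(y)=\frac{1}{\deg(y)}\sum_{z\sim y}v(z)\,,
$$
with the boundary values $v(X_0)=1$ and $v(Y_0)=0$. The inequality $v(x)\le d\,v(y)$ will then be immediate by keeping only the single term coming from the neighbor $x$.

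Concretely, I would first dispose of the degenerate case $y=X_0$: there $v(y)=1\ge v(x)$, and since $d\ge1$ we get $v(x)\le d\,v(y)$ trivially. Now suppose $y\notin\{X_0,Y_0\}$ (recall $y\ne Y_0$ by hypothesis). Since $x$ is a neighbor of $y$ and every term in the sum above is nonnegative, the mean-value identity yields
$$
v(y)\ge \frac{v(x)}{\deg(y)}\ge \frac{v(x)}{d}\,,
$$
which rearranges to $v(x)\le d\,v(y)$, as desired. Equivalently, one can phrase this without invoking harmonicity: from $y\ne Y_0$ the walk steps to $x$ on its first move with probability at least $1/\deg(y)$, so $v(y)\ge v(x)/\deg(y)$ by the Markov property.

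There is no real obstacle in this lemma; the only point needing a moment's attention is the boundary vertex $X_0$, where the mean-value identity fails but the claimed bound holds for free. I would also note in passing that running the same argument with the roles of $x$ and $y$ interchanged gives the companion bound $v(y)\le d\,v(x)$, so $v$ varies by at most a multiplicative factor $d$ across any edge of $G\setminus\{Y_0\}$; only the one-sided inequality stated here is needed for the proof of Theorem~\ref{conductance}.
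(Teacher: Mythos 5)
Your proof is correct and is exactly the argument the paper intends: the paper's proof is the one-line remark that the bound ``follows immediately from the harmonicity of $v$,'' and you have simply spelled out the first-step (mean-value) decomposition, including the harmless boundary case $y=X_0$. No issues.
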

\begin{pf}
This follows immediately from the harmonicity of $v$.
\end{pf}

Now, divide the vertices of
$G$ into sets $G_i=\{x\in V(G) \mid d^{-i-1}< v(x) \le d^{-i} \}$.
By the lemma above we get that all the edges in $G$ are within some
$G_i$ or between $G_i$ and $G_{i+1}$ for some $i$.
The following lemma bounds the conductance of
these slices of the graph. This is similar
to \cite{trace}, Lemma 2.3.
\begin{lemma}
\[
C_{\eff}(G_i \lra G_{i+2} ; G) \le2 d^{i+1} C_{\eff}(X_0 \lra
Y_0 ; G) .
\]
\end{lemma}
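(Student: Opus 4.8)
The plan is to bound the slice conductance from above by exhibiting a single good test function and invoking the Dirichlet (variational) principle, which states that $C_{\eff}(G_i \lra G_{i+2}; G)$ equals the minimum of the Dirichlet energy $\mathcal{E}(g) := \sum_{(x,y)\in E(G)}\bigl(g(x)-g(y)\bigr)^2$ over all $g$ with $g\equiv 1$ on $G_i$ and $g\equiv 0$ on $G_{i+2}$. So it suffices to produce one admissible $g$ whose energy is at most $2\,d^{i+1}\,C_{\eff}(X_0\lra Y_0;G)$. The natural candidate is built from $v$: set $L := \bigl(d^{-i-1}-d^{-i-2}\bigr)^{-1} = d^{i+2}/(d-1)$ and $u := \phi\circ v$, where $\phi$ is the piecewise-linear ``clip'' that equals $0$ below $d^{-i-2}$, rises with slope $L$ on $[d^{-i-2},d^{-i-1}]$, and equals $1$ above $d^{-i-1}$. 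Since $v>d^{-i-1}$ on $G_i$ and $v\le d^{-i-2}$ on $G_{i+2}$, we get $u\equiv 1$ on $G_i$ and $u\equiv 0$ on $G_{i+2}$, so $u$ is admissible.

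The key quantitative input is that $v$ is the unit-voltage harmonic potential for $(X_0,Y_0)$: it is harmonic off $\{X_0,Y_0\}$ with $v(X_0)=1$, $v(Y_0)=0$, so $v(x)-v(y)$ is exactly the current across the edge $(x,y)$, and $C_{\eff}(X_0\lra Y_0;G)=\mathcal{E}(v)=:C$. By Kirchhoff's node law, the current crossing any cut separating $X_0$ from $Y_0$ equals $C$; applied to the level cut $\{v>\lambda\}$ versus $\{v\le\lambda\}$ (which separates them for every $\lambda\in(0,1)$, and across which every edge carries current from the higher-$v$ to the lower-$v$ side), this yields $\sum_{(x,y):\,v(x)>\lambda\ge v(y)}\bigl(v(x)-v(y)\bigr)=C$ for all $\lambda\in(0,1)$, where each edge is oriented so its first endpoint has the larger value.

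To estimate $\mathcal{E}(u)$ I would use the elementary co-area identity $\mathcal{E}(u)=\int_0^1 F(\mu)\,d\mu$, where $F(\mu):=\sum_{(x,y):\,u(x)>\mu\ge u(y)}\bigl(u(x)-u(y)\bigr)$ is the flux of $u$ across level $\mu$ (this is the layer-cake evaluation of $(u(x)-u(y))^2=\int \mathbf{1}_{\{u(y)\le\mu<u(x)\}}\,(u(x)-u(y))\,d\mu$, summed over edges oriented from higher to lower value). Because $\phi$ is increasing, the edges with $u(x)>\mu\ge u(y)$ are exactly those with $v(x)>\lambda\ge v(y)$ for $\lambda=\phi^{-1}(\mu)$; and because $\phi$ is $L$-Lipschitz, each such edge satisfies $u(x)-u(y)\le L\,(v(x)-v(y))$. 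Hence $F(\mu)\le L\sum_{(x,y):\,v(x)>\lambda\ge v(y)}\bigl(v(x)-v(y)\bigr)=L\,C$ for (a.e.) $\mu\in(0,1)$, and integrating over the unit interval gives $\mathcal{E}(u)\le L\,C=\tfrac{d}{d-1}\,d^{i+1}C\le 2\,d^{i+1}C$, using $d\ge 2$. Combined with the Dirichlet principle this is exactly the claimed bound.

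The step I expect to be the crux — and the reason the correct exponent is $i+1$ rather than $2i$ — is the passage through the co-area representation. A direct edge-by-edge Lipschitz estimate would give $\mathcal{E}(u)\le L^2\sum_{\text{transition edges}}(v(x)-v(y))^2$, i.e.\ a spurious factor $L^2\asymp d^{2i}$; the co-area viewpoint is what lets the constant flux $C$ absorb one factor of $L$ while the width $1/L$ of the transition window absorbs the other, leaving a single factor $L\asymp d^{i+1}$. The only points needing care are the verification of the boundary values of $u$ (immediate from the definitions of $G_i$ and $G_{i+2}$) and the observation that all cut edges carry current in the same direction, so that the Kirchhoff flux equals the unsigned sum $\sum(v(x)-v(y))$; both are routine.
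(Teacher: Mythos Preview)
Your proof is correct. Both your argument and the paper's exploit the same underlying fact---that the harmonic potential $v$ already carries exactly $C:=C_{\eff}(X_0\lra Y_0;G)$ units of current across every level set---but they package it differently. The paper subdivides each edge crossing the levels $d^{-i-1}$ and $d^{-i-2}$ so as to create genuine vertex sets $Z$, $Z'$ sitting exactly at those voltages, then reads off $C_{\eff}(Z\lra Z';\tilde G)=C/(d^{-i-1}-d^{-i-2})$ directly from Ohm's law (total current divided by voltage drop) and bounds $C_{\eff}(G_i\lra G_{i+2};G)$ by this since any flow from $G_i$ to $G_{i+2}$ must pass through both $Z$ and $Z'$. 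You instead stay in the original graph, feed the clipped potential $\phi\circ v$ into the Dirichlet principle, and use the co-area identity to convert the quadratic energy into an integral of linear fluxes, each of which equals $C$ by Kirchhoff. The two computations are dual views of the same phenomenon and yield the identical constant $L=d^{i+2}/(d-1)\le 2d^{i+1}$. One practical difference: the paper's subdivided network $\tilde G$ and the sets $Z,Z'$ are reused verbatim in the next lemma (on $C_{\eff}(G_i\lra G_{i+2};\bar G)$), so their approach amortizes the setup cost; your argument is cleaner as a standalone proof but would need to be redone for that step.
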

\begin{pf}
Since $v(X_0)=1$ and $v(Y_0)=0$, the total current flowing through
$G$ is equal to $C_{\eff}(X_0 \lra Y_0 ; G)$. Now, subdivide every
edge $(x,y)$ connecting $G_i$ with $G_{i+1}$, by adding a new vertex
$z$ and replacing the edge $(x,y)$ by edges $(x,z)$ and $(z,y)$
having conductances $c_{xz} = (v(x)-v(y))/(v(x)-d^{-i-1})$ and
$c_{zy} = (v(x)-v(y))/(d^{-i-1} - v(y))$. This subdivision will
result in a network with $v(z)=d^{-i-1}$ and all other voltages
unchanged. Denote the set of new vertices by $Z$. Similarly,
subdividing the edges between $G_{i+1}$ and $G_{i+2}$ yields a new
set $Z'$ of vertices with voltage of $d^{-i-2}$. If
we run current from $G_i$ to $G_{i+2}$ in the modified network
$\widetilde G$,
then all the current must flow through $Z$ and $Z'$. Hence,
$C_{\eff}(G_i \lra G_{i+2};G)\le C_{\eff}(Z \lra Z';\widetilde G)$.
However,
%
%
\begin{equation}
\label{e.Gc}
C_{\eff}(Z \lra
Z';\widetilde G)=\frac{C_{\eff}(X_0 \lra Y_0;G)}{d^{-i-1}-d^{-i-2}} ,
\end{equation}
since the total
current from $X_0$ to $Y_0$ in $\widetilde G$ is $C_{\eff}(X_0 \lra Y_0 ;
G)$ and the voltage difference
between $Z$ and $Z'$ is $d^{-i-1}-d^{-i-2}$. Since $d\ge2$ we get
the required inequality.
\end{pf}

Denote by $N(x,y)$ the number of times the random walk crossed the
edge $(x,y)$, in either direction. Then $\overline G:=(G,E(N))$ is a
new network,
with the same edges as in $G$, but each edge $(x,y)$ has a
conductance equal to the expected number of crossing of $(x,y)$.
\begin{lemma} \label{layer}
\[
C_{\eff}(G_i \lra G_{i+2} ; \overline G) \le4 .
\]
\end{lemma}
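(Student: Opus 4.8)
The plan is to first read off the edge conductances of $\bar G$ explicitly, then bound $C_{\eff}(G_i\lra G_{i+2};\bar G)$ by the Dirichlet energy of a single test function supported on the middle layer $G_{i+1}$, and finally fight for the numerical constant. Step~1: the conductances of $\bar G$. Since the walk killed at $Y_0$ is reversible with respect to the degree measure, for every ordered pair of adjacent vertices $x,y$ with $x\ne Y_0$ we have $E\bigl(N^{+}(x,y)\bigr)=\mathcal G(X_0,x)/\deg(x)=\mathcal G(x,X_0)/\deg(X_0)$, where $\mathcal G$ is the Green's function of the killed walk and $N^{+}$ counts crossings in a fixed direction; an escape‑probability computation gives $\mathcal G(x,X_0)=v(x)\deg(X_0)/C_0$ with $C_0:=C_{\eff}(X_0\lra Y_0;G)$, so
\begin{equation*}
c^{\bar G}_{(x,y)}=E\bigl(N(x,y)\bigr)=\frac{v(x)+v(y)}{C_0}.
\end{equation*}
Two consequences: (i) for any edge set $\Pi$ separating $X_0$ from $Y_0$, $\sum_{e\in\Pi}c^{\bar G}_e=E\bigl(\#\{\text{crossings of }\Pi\text{ by the walk}\}\bigr)$; (ii) writing $i_{xy}:=v(x)-v(y)$ for the unit‑conductance current of $G$ from $X_0$ to $Y_0$ (of total strength $C_0$), the identity $(v(x)+v(y))(v(x)-v(y))^2=i_{xy}(v(x)^2-v(y)^2)$ together with the discrete divergence theorem gives the normalisation $\mathcal E_{\bar G}(v)=\tfrac1{C_0}\sum_e i_{xy}(v(x)^2-v(y)^2)=\tfrac1{C_0}\cdot C_0=1$, which is morally why one should expect an absolute constant.

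Step~2: reduction and test function. The layers strictly below $G_i$ and strictly above $G_{i+2}$ are attached to $G$ only through $G_i$, resp. $G_{i+2}$, so discarding them does not change $C_{\eff}(G_i\lra G_{i+2};\bar G)$; equivalently, any function that is $\equiv1$ on $G_i$ and $\equiv0$ on $G_{i+2}$ carries its whole $\bar G$‑energy on the edges incident to $G_{i+1}$. I would take $u$ to be the truncation $\tilde v$ of $v$ to the window $[d^{-i-2},d^{-i-1}]$, rescaled affinely to $[0,1]$: it is $1$ on $G_i$, $0$ on $G_{i+2}$, varies only on $G_{i+1}$, and
\begin{equation*}
C_{\eff}(G_i\lra G_{i+2};\bar G)\le\mathcal E_{\bar G}(u)=\frac{1}{(d^{-i-1}-d^{-i-2})^2}\sum_{e\text{ incident to }G_{i+1}}c^{\bar G}_e\bigl(\tilde v(x)-\tilde v(y)\bigr)^2.
\end{equation*}

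Step~3: the slab energy estimate — the main obstacle. Everything hinges on showing this right‑hand side is at most $4$, and this is the delicate point. Capping each increment crudely by $d^{-i-1}-d^{-i-2}$ only yields $\sum_e c^{\bar G}_e$, which is unbounded because the walk can cross $G_{i+1}$ arbitrarily often; bounding $\tilde v$‑increments by genuine $v$‑increments and localising the divergence identity of Step~1 to $G_{i+1}$ replaces the sum by a boundary flux $\le C_0\max_{G_i}v^2=C_0d^{-2i}$, which beats the $(d^{-i-1}-d^{-i-2})^{-2}$ factor only up to a multiple of $d^2$. To get a universal constant one must exploit a cancellation at the level of single vertices of $G_{i+1}$: for $y\in G_{i+1}$ the truncated increments along the $G_i$–$G_{i+1}$ edges at $y$ equal $d^{-i-1}-v(y)$ and along the $G_{i+1}$–$G_{i+2}$ edges equal $v(y)-d^{-i-2}$, and these sum to the constant $d^{-i-1}-d^{-i-2}$, so the contributions of the two cuts $\Pi_1=E(G_i,G_{i+1})$ and $\Pi_2=E(G_{i+1},G_{i+2})$ trade off against one another; combined with the fact that $\bar G$‑conductances are larger on $\Pi_1$ (whose edges join a vertex to one of strictly larger $v$) than on $\Pi_2$, and with the divergence bookkeeping applied now to $\tilde v^2$ instead of $v^2$, the slab energy telescopes down to $\le 4$. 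Two equivalent routes I would fall back on if the direct computation gets unwieldy: probabilistically, the random walk on $\bar G$ is the $\tfrac12$–$\tfrac12$ mixture of the simple random walk and its Doob $v$‑transform (the walk conditioned to hit $X_0$ before $Y_0$), hence drifts toward $X_0$, and $C_{\eff}(G_i\lra G_{i+2};\bar G)=\pi_{\bar G}(G_i)\,P^{\bar G}_{G_i}(\tau_{G_{i+2}}<\tau^+_{G_i})$ is forced below $4$ once the drift across the two layers $G_{i+1},G_{i+2}$ is controlled; alternatively, apply Nash–Williams to the disjoint cutsets $\Pi_1,\Pi_2$ using $\sum_{e\in\Pi_j}c^{\bar G}_e=1+2E(U_j)$ with $U_j$ the number of up‑crossings of the $j$‑th $v$‑level, and bound $E(U_j)$ by optional stopping on $v(X_t)$ — the care needed there being that $v(X_t)$ is a martingale only away from $X_0$, at which it is reset to $1$.
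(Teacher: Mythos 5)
Your Step~1 and Step~2 are correct and set up a genuinely different route from the paper's. The paper never writes down a test function: it subdivides each edge crossing the levels $d^{-i-1}$ and $d^{-i-2}$ so as to create vertex sets $Z,Z'$ sitting exactly at those voltages, bounds $C_{\eff}(G_i\lra G_{i+2};\bar G)\le C_{\eff}(Z\lra Z';\bar{\tilde G})$ by a coupling, and then compares $\bar{\tilde G}$ to $\tilde G$ edge by edge using $\tilde E(N(x,y))=(\tilde v(x)+\tilde v(y))\tilde C_{xy}/C_{\eff}(X_0\lra Y_0;\tilde G)\le 2d^{-i-1}\tilde C_{xy}/C_{\eff}$, which is legitimate there precisely because after subdivision every endpoint between $Z$ and $Z'$ has voltage at most $d^{-i-1}$; the constant then comes from the previous lemma's computation of $C_{\eff}(Z\lra Z';\tilde G)$. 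The subdivision is exactly the paper's device for killing the ``excess'' $v(x)-d^{-i-1}$ on the $G_i$--$G_{i+1}$ edges that you correctly identify as the obstruction. However, your Step~3 --- the only place the constant $4$ can come from --- is asserted rather than proved: you name the right identity (``divergence bookkeeping applied to $\tilde v^2$'') but never apply it, the mechanism you describe (the two truncated increments at $y$ summing to $\delta$, and $\Pi_1$ carrying larger conductances than $\Pi_2$) is not what closes the argument, and the explicit offer of two fallback routes ``if the direct computation gets unwieldy'' makes clear the estimate was not carried out. As written this is a gap, not a proof.

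The gap is fillable, and your route then gives the paper's exact constant $2d/(d-1)\le4$, so let me record the missing two lines. Write $a=d^{-i-2}$, $b=d^{-i-1}$, $\delta=b-a$, and split $v(x)+v(y)=\bigl(\tilde v(x)+\tilde v(y)\bigr)+\bigl(v(x)-\tilde v(x)\bigr)+\bigl(v(y)-\tilde v(y)\bigr)$ in the energy $\delta^{-2}C_0^{-1}\sum_e(v(x)+v(y))(\tilde v(x)-\tilde v(y))^2$. For the main term, use $(\tilde v(x)+\tilde v(y))(\tilde v(x)-\tilde v(y))^2=(\tilde v(x)^2-\tilde v(y)^2)(\tilde v(x)-\tilde v(y))\le(\tilde v(x)^2-\tilde v(y)^2)(v(x)-v(y))$ (same signs, smaller increment), and sum by parts against the unit current $i_{xy}=v(x)-v(y)$, whose divergence vanishes off $\{X_0,Y_0\}$: the sum equals $C_0(b^2-a^2)$, giving $(b+a)/(b-a)=(d+1)/(d-1)\le3$. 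The excess $v(z)-\tilde v(z)$ is $\le0$ wherever $v(z)\le a$, so the only positive contribution comes from edges with $v(x)>b\ge v(y)$, i.e.\ the edge boundary of $\{v>b\}$; there $(\tilde v(x)-\tilde v(y))^2\le\delta^2$ and $\sum(v(x)-b)\le\sum(v(x)-v(y))=C_0$ (total current through that cut), so this term contributes at most $\delta^2C_0/(\delta^2C_0)=1$. Altogether $\mathcal E_{\bar G}(u)\le(d+1)/(d-1)+1=2d/(d-1)\le4$ for $d\ge2$. With this inserted, your proof is complete and arguably more self-contained than the paper's, since it does not need the subdivision construction or the previous lemma; what it costs is exactly the excess bookkeeping that the subdivision was designed to avoid.
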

\begin{pf}
Let $\widetilde G, Z $ and $Z'$ be as in the proof of the previous lemma.
We use $\widetilde E$ to denote the expectation\vspace*{1pt} with respect to the
random walk on the network $\widetilde G$, and likewise use
$\widetilde C_{xy}$ to denote the conductance of an edge in $\widetilde G$,
etc.
Suppose that an edge $(x,y)$ in $G$ is subdivided in $\widetilde G$
into $(x,z)$ and $(z,y)$.
In that case $E(N(x,y))\le\widetilde E(N(z,y))$,
because the random walk on the
graph $G$ can be coupled with a random walk on the network $\widetilde G$
so that they stay together, except
that the walk on $\widetilde G$ may traverse from $x$ to $z$ and back to
$z$ or
from $y$ to $z$ and back to $y$, while the first random walk stays
in $x$ or $y$, respectively, and similarly for the other subdivided
edges. Let $\overline{\widetilde G}$ be the network whose underlying graph
is that of $\widetilde G$ and where the conductance of every edge is the
expected number of times the random walk on $\widetilde G$ uses that edge.
The above comparison implies that
%
%
\begin{equation}\label{e.GG}
C_{\eff}(G_i \lra G_{i+2} ; \overline G) \le
C_{\eff}(Z\lra Z';\overline{\widetilde G}) .
\end{equation}

Let\vspace*{1pt} $(x,y)$ be an edge of $\widetilde G$ in the part of $\widetilde G$
between $Z$ and $Z'$. We have
$\widetilde E(N(x,y))=\widetilde g(x) \widetilde C_{xy}/\widetilde C_x +
\widetilde g(y) \widetilde C_{xy}/\widetilde C_y$,
where\vspace*{1pt} $g\widetilde(x)$ is the expected number of visits to $x$ before
hitting $Y_0$ and $\widetilde C_x = \sum_{y\sim x} \widetilde C_{xy}$. By
reversibility of
the random walk, we have $\widetilde g(x)/\widetilde C_x = \widetilde
v(x) \widetilde
g(X_0)/\widetilde C_{X_0} $. Since
$\widetilde g(X_0)/\widetilde C_{X_0} = 1/C_{\eff}(X_0 \lra Y_0 ;
{\widetilde G})$
we have
%
%
\begin{equation}
\label{e.NC}
\widetilde E(N(x,y))=
\frac{\widetilde v(x)+\widetilde v(y)}{C_{\eff}(X_0 \lra Y_0 ;
{\widetilde G})}
\widetilde C_{xy}
\le
\frac{2 d^{-i-1}}{C_{\eff}(X_0 \lra
Y_0 ; {\widetilde G})} \widetilde C_{xy} .
\end{equation}
Combining the above estimates, we get
\begin{eqnarray*}
C_{\eff}(G_i \lra G_{i+2} ; \overline G)
&\overset{\mbox{\fontsize{8.36pt}{10.36pt}\selectfont{(\ref
{e.GG})}}}{\le}&
C_{\eff}(Z\lra Z';\overline{\widetilde G})
\\
&\overset{\mbox{\fontsize{8.36pt}{10.36pt}\selectfont{(\ref
{e.NC})}}}{\le}&
C_{\eff}(Z\lra Z';{\widetilde G})
\frac{2 d^{-i-1}}{C_{\eff}(X_0 \lra Y_0 ; {\widetilde G})}
\\
&\overset{\mbox{\fontsize{8.36pt}{10.36pt}\selectfont{(\ref{e.Gc})}}}{=}&
\frac{C_{\eff}(X_0 \lra Y_0;G)}{d^{-i-1}-d^{-i-2}}
\frac{2 d^{-i-1}}{C_{\eff}(X_0 \lra Y_0 ; {\widetilde G})}\\
&=& 2 \frac{d}{d-1}
\le4 .
\end{eqnarray*}

The penultimate equality is valid since the subdivision has no effect
on the effective conductance between $X_0$ and $Y_0$.
\end{pf}

Let $G^N$ denote the network on the graph $G$ where the conductance of
any edge $(x,y)$ is the number of times in which the random walk
path traverses that edge.
Observe that
%
%
\begin{equation} \label{convex}
E\bigl(C_{\eff}(X_0 \lra Y_0 ; G^N)\bigr) \le C_{\eff}(X_0 \lra Y_0
; \overline G )
\end{equation}
follows immediately from the concavity of $C_{\eff}$ (see \cite
{trace} for a proof).

Now, we can complete the proof.
\begin{pf*}{Proof of Theorem \ref{conductance}}
First, notice that since $1 \le N(x,y)$ for every edge $(x,y)\in
\PATH$, we know that $C_{\eff}(X_0 \lra Y_0 ; \PATH) \le C_{\eff}(X_0
\lra Y_0 ; G^N)$. Next, from (\ref{convex}) we get that
$E(C_{\eff}(X_0 \lra Y_0 ; G^N)) \le C_{\eff}(X_0 \lra
Y_0 ;
\overline G)$. To bound this conductance, we note that $C_{\eff}(X_0
\lra Y_0 ; \overline G) \le C_{\eff}(G_0 \lra G_n ; \overline G)$, where
$n=\lfloor\log(1/s) / \log(d) \rfloor$, because, $X_0$ is contained
in $G_0$ and by the definition of $s$, $G_n$ separates $X_0$ from
$Y_0$.

Next, we contract every even $G_i$ to a single vertex. Since
$C_{\eff}(G_i \lra G_{i+2} ; \overline G) \le4$, we have that
\[
C_{\eff}(G_0 \lra G_n ; \overline G) \le\frac{4}{\lfloor n/2 \rfloor}
=\frac{4}{\lfloor q/2\rfloor} ,
\]
where $q= \log(1/s)/ \log(d)$.
If $q\ge12$, this gives
%
%
\begin{equation}\label{e.GN}
E\bigl(C_{\eff}(X_0 \lra Y_0 ; G^N)\bigr) \le\frac{12 \log
d}{\log(1/s)} ,
\end{equation}
while if $q<12$, this holds as well, because the right-hand side is
larger than
$1$ and in $G^N$ the effective conductance between $X_0$ and $Y_0$ is
at most $1$.
This completes the proof.
\end{pf*}

To illustrate the theorem and the estimate (\ref{e.GN}), consider the
two-dimensional lattice
$\ZZ^2$ and the random walk is started at the origin and stopped
upon reaching Euclidean distance larger than some large $r>0$.
We may then contract the vertices of $\ZZ^2$ outside the disk
of radius $r$ to a single vertex $Y_0$. Then $d=4$ and
$s=\Theta((r \log r)^{-1})$, so our bound on the
expected conductance of $\PATH$ is
$O(1/\log r )$. Of course, the conductance in $\ZZ^2$ itself is also
$\Theta(1/\log r )$, and thus the theorem does not
give any new bound in this case. However, the specialization
to this setting of the bound (\ref{e.GN}) is
nontrivial, since a typical edge in $\PATH$ is actually
expected to have a multiplicity of roughly $\log r$.

Perhaps a more interesting example is obtained stopping the walk at
distance~$r$, but considering the expected conductance of $G^N$ or
of $\PATH$ to distance $r/2$.
Here, our
theorem does not apply as is, but it is easy to see that by
choosing $n=\Theta(\log\log r)$ appropriately the above
proof gives a bound of $O(1/\log\log r)$ on the expected conductance.
To appreciate this bound, note that in this case there will
typically be many more edges near the target distance of $r/2$ that are
in $\PATH$.

%

%
\printaddresses

\end{document}